\numberwithin{equation}{section}
\newtheorem{theorem}{Theorem}[section]
\newtheorem{definition}[theorem]{Definition}
\newtheorem{example}[theorem]{Example}
\newtheorem{proposition}[theorem]{Proposition}
\newtheorem{remark}[theorem]{Remark}
\newcommand{\R}{\mathbb{R}}
\newcommand{\comp}{\mathbb{C}}
\newcommand{\tor}{\mathbb{T}}
\newcommand{\disc}{\mathbb{D}}
\newcommand{\calt}{\circlearrowright}
\newcommand{\M}{\mathcal{M}}
\newcommand{\MM}{\mathbf{M}}
\newcommand{\U}{\mathbf{U}}
\newcommand{\B}{\mathbb{B}}
\newcommand{\D}{\mathbf{D}}
\newcommand{\MP}{\bm{\pi}}
\newcommand{\Beta}{\bm{\beta}}
\newcommand{\bmrho}{\bm{\rho}}
\newcommand{\bmsigma}{\bm{\sigma}}
\newcommand{\iu}{\mathcal{UI}}
\def\mrhd{\kern0.2em\rule{0.035em}{0.52em}\kern-.35em\gtrdot \kern-.2em}
\def\submrhd{\mathop{\kern0.0em\lower0.05ex\hbox{\rule{0.03em}{0.39em}}\kern-0.08em\gtrdot \kern0.0em}}
\def\dismrhd{\mathop{{\kern0.1em\lower0.07ex\hbox{\rule{0.035em}{0.57em}}\kern-.1em\gtrdot \kern0.02em}}}
\newcommand{\utimes}{\kern0.05em\buildrel{\times}\over{\rule{0em}{0.004em}} \kern-0.9em\cup \kern0.2em}
\newcommand{\putimes}{\mathop{\kern0.05em\buildrel{\times}\over{\rule{0em}{0.0em}} \kern-0.9em\cup \kern0em}}
\newcommand{\hutimes}{\mathop{\kern0.02em\buildrel{\times}\over{\rule{0em}{0.0em}} \kern-0.48em\cup \kern0em}}
\newcommand{\sutimes}{\mathrel{\kern0em \buildrel{\mathsf{x}}\over{\rule{0em}{0.0em}} \kern-0.35em\cup \kern-0.0em}}
\begin{document}
\title{Free Subordination and Belinschi-Nica Semigroup
}
\author{Octavio Arizmendi \\ Department of Probability and Statistics, CIMAT, \\ Guanajuato, Mexico \\  Email:
octavius@cimat.mx  \\ \\ Takahiro Hasebe\footnote{Supported by European Commision, Marie Curie Actions -- International Incoming Fellowships (Project 328112 ICNCP) at University of Franche-Comt\'e; supported also by JSPS, Global COE program ``Fostering top leaders in mathematics -- broadening the core and exploring new ground'' at Kyoto university.}\\ 
Department of Mathematics, Hokkaido University \\ Kita 10, Nishi 8, Kitaku, Sapporo 060-0810, Japan. \\ Email:
thasebe@math.sci.hokudai.ac.jp } 
\date{\today}

\maketitle
\begin{abstract}
We realize the Belinschi-Nica semigroup of homomorphisms as a free multiplicative subordination. This realization allows to define more general semigroups of homomorphisms with respect to free multiplicative convolution. For these semigroups we show that a differential equation holds, generalizing the complex Burgers equation. We give examples of free multiplicative subordination and find a relation to the Markov-Krein transform, Boolean stable laws and monotone stable laws. A similar idea works for additive subordination, and in particular we study the free additive subordination associated to the Cauchy distribution and show that it is a homomorphism with respect to monotone, Boolean and free additive convolutions. 
\end{abstract}

Mathematics Subject Classification 2010: 46L54

Keywords: Free subordination, Boolean stable law, monotone stable law, Markov-Krein transform

\tableofcontents

\section{Introduction}


In \cite{BN08}, Belinschi and Nica defined, for each $t>0,$  the map  
\[
\mathbb{B}_t(\mu) = \Big(\mu^{\boxplus (1+t)}\Big) ^{\uplus\frac{1}{1+t}},\qquad \mu\in\mathcal{P}(\R).  
\]
The family $\{\mathbb{B}_t\}_{t\geq0}$  is a composition semigroup of homomorphisms in the sense that it satisfies
$$
\mathbb{B}_t(\mu_1\boxtimes\mu_2)=\mathbb{B}_t(\mu_1)\boxtimes\mathbb{B}_t(\mu_2), \qquad\mu_1,\mu_2\in\mathcal{P}(\R_+),\quad t\geq0
$$
 and
$$
\mathbb{B}_t\circ\mathbb{B}_s=\mathbb{B}_{t+s} \text{~on~}\mathcal{P}(\R),\qquad s,t\geq0. 
$$

In this paper, we realize $\mathbb{B}_t$ as a free multiplicative subordination measure, giving a new look at the semigroup $\{\mathbb{B}_t \}_{t \geq 0}$ and generalizing it. 
That is, we are able to show that $\mathbb{B}_t(\mu)$ satisfies the formula
$$
 \bmsigma_{t}\boxtimes \mu=\bmsigma_t \calt\mathbb{B}_t(\mu),\qquad \mu \in \mathcal{P}(\R_+), 
$$
where $\calt$ denotes monotone multiplicative convolution and $\bmsigma_t$ is the Bernoulli law
$$
\bmsigma_t=\frac{t}{1+t}\delta_0+\frac{1}{1+t}\delta_{1+t}. 
$$
We can generalize the map $\mathbb{B}_t$ by changing $\bmsigma_t$ to any other measure $\sigma$ on $\R_+$ $(\sigma \neq \delta_0)$ or $\sigma$ on $\tor$, to obtain the map $\mathbb{B}_\sigma$ defined by 
\begin{equation}\label{subordination0}
\sigma\boxtimes\mu=\sigma\calt\mathbb{B}_\sigma(\mu),
\end{equation}
where $\sigma \in \mathcal{P}(\R_+)\setminus\{\delta_0\},\mu\in\mathcal{P}(\R_+)$ or $\sigma,\mu\in\mathcal{P}(\tor)$. 
This relation means that $\mathbb{B}_\sigma(\mu)$ is exactly the \emph{multiplicative subordination} for the convolution $\boxtimes$ (see \cite{Biane, BB07} for more details and \cite{L08} for an operator model) and then $\mathbb{B}_t$ appears in the special case when $\sigma=\bmsigma_t.$ 
One may think of this generalization artificial, but important properties of $\{\mathbb{B}_t\}_{t\geq0}$ remain true for $\mathbb{B}_\sigma$, as the following results show.  

\begin{theorem}\label{mainthm1} 
Let $\mu_1,\mu_2\in\mathcal{P}(\R_+)$, $\sigma,\sigma_1,\sigma_2 \in \mathcal{P}(\R_+)\setminus\{\delta_0\}$ or let $\mu_1,\mu_2,\sigma_1,\sigma_2,\sigma\in\mathcal{P}(\tor)$. 
Then 
\begin{equation}\label{eq main1}
\mathbb{B}_\sigma(\mu_1\boxtimes\mu_2)=\mathbb{B}_\sigma(\mu_1)\boxtimes\mathbb{B}_\sigma(\mu_2)
\end{equation}
and 
\begin{equation}\label{eq main2}
\mathbb{B}_{\sigma_1}\circ\mathbb{B}_{\sigma_2}=\mathbb{B}_{\sigma_2\calt\sigma_1}\text{~on~} \mathcal{P}(\R_+). 
\end{equation}
\end{theorem}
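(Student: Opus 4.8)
The natural strategy is to translate everything into the language of the relevant transforms and exploit the defining relation \eqref{subordination0}. Recall that multiplicative subordination is governed by the $\Sigma$-transform (equivalently the $S$-transform) for $\boxtimes$, and by its monotone analogue for $\calt$. For measures on $\R_+$ one works with $\psi_\mu(z)=\int \frac{zx}{1-zx}\,d\mu(x)$ and its inverse, forming $\eta_\mu=\psi_\mu/(1+\psi_\mu)$; on $\tor$ one uses the analogous moment generating function. The key facts are: (i) $\boxtimes$ linearizes as $\Sigma_{\mu_1\boxtimes\mu_2}=\Sigma_{\mu_1}\Sigma_{\mu_2}$ (multiplicativity of the $S$-transform), and (ii) monotone multiplicative convolution $\calt$ corresponds to \emph{composition} of the reciprocal maps, $\eta_{\sigma\calt\nu}=\eta_\sigma\circ\eta_\nu$ (this is the multiplicative monotone analogue of the additive relation $F_{\mu\rhd\nu}=F_\mu\circ F_\nu$). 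Under these dictionaries, the subordination equation $\sigma\boxtimes\mu=\sigma\calt\mathbb{B}_\sigma(\mu)$ says precisely that $\eta_{\mathbb{B}_\sigma(\mu)}$ is the function $\eta_\sigma^{-1}\circ\eta_{\sigma\boxtimes\mu}$; since $\mathbb{B}_\sigma(\mu)$ is defined as \emph{the} subordinate, one should first note (or cite from the earlier part of the paper) that this $\eta$ genuinely corresponds to a probability measure, so that $\mathbb{B}_\sigma(\mu)$ is well defined.

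For \eqref{eq main1}, I would write $\Sigma$ for the $S$-transform and argue as follows. By definition of $\mathbb{B}_\sigma$ and the composition rule for $\calt$, the function $\eta_{\mathbb{B}_\sigma(\mu_i)}=\eta_\sigma^{-1}\circ\eta_{\sigma\boxtimes\mu_i}$. Passing to $S$-transforms, the subordination relation $\sigma\boxtimes\mu_i=\sigma\calt\mathbb{B}_\sigma(\mu_i)$ gives an identity expressing $S_{\mathbb{B}_\sigma(\mu_i)}$ in terms of $S_{\sigma\boxtimes\mu_i}=S_\sigma\cdot S_{\mu_i}$ and $S_\sigma$; the crucial point is that the dependence on $\mu_i$ enters only through $S_{\mu_i}$, and it does so multiplicatively after the appropriate change of variable dictated by $\eta_\sigma^{-1}$. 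Concretely, writing $S_{\mathbb{B}_\sigma(\mu)}(w)=\Phi_\sigma\!\bigl(w,\,S_{\mu}(h_\sigma(w))\bigr)$ for suitable auxiliary functions $\Phi_\sigma,h_\sigma$ depending only on $\sigma$, and checking that $\Phi_\sigma(w,ab)=\Phi_\sigma(w,a)\Phi_\sigma(w,b)$ after matching variables, one gets $S_{\mathbb{B}_\sigma(\mu_1\boxtimes\mu_2)}=S_{\mathbb{B}_\sigma(\mu_1)}\cdot S_{\mathbb{B}_\sigma(\mu_2)}$, which is exactly \eqref{eq main1}. The honest way to do this is to compute $\eta_{\mathbb{B}_\sigma(\mu_1\boxtimes\mu_2)}\circ$ everything and compare; it is a finite manipulation once the transform dictionary is in place.

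For \eqref{eq main2}, the cleanest route is again through the $\eta$-composition rule. Apply \eqref{subordination0} twice: $\sigma_2\boxtimes\mu=\sigma_2\calt\mathbb{B}_{\sigma_2}(\mu)$, and then $(\sigma_2\calt\sigma_1)\boxtimes\mu=(\sigma_2\calt\sigma_1)\calt\mathbb{B}_{\sigma_2\calt\sigma_1}(\mu)$. On the other hand, using associativity of $\calt$ and the definition of $\mathbb{B}_{\sigma_1}$ applied to the measure $\mathbb{B}_{\sigma_2}(\mu)$ together with $\sigma_1\boxtimes\mathbb{B}_{\sigma_2}(\mu)=\sigma_1\calt\mathbb{B}_{\sigma_1}(\mathbb{B}_{\sigma_2}(\mu))$, one wants to show $(\sigma_2\calt\sigma_1)\boxtimes\mu=(\sigma_2\calt\sigma_1)\calt(\mathbb{B}_{\sigma_1}\circ\mathbb{B}_{\sigma_2})(\mu)$; by uniqueness of the subordinate this yields $\mathbb{B}_{\sigma_2\calt\sigma_1}(\mu)=\mathbb{B}_{\sigma_1}(\mathbb{B}_{\sigma_2}(\mu))$. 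Translating to $\eta$'s: $\eta_{\sigma_i}\circ\eta_{\mathbb{B}_{\sigma_i}(\cdot)}=\eta_{\sigma_i\boxtimes(\cdot)}$, and the chain $\eta_{\sigma_2}\circ\eta_{\sigma_1}\circ\eta_{\mathbb{B}_{\sigma_1}\circ\mathbb{B}_{\sigma_2}(\mu)}$ should collapse to $\eta_{(\sigma_2\calt\sigma_1)\boxtimes\mu}$ provided one knows how $\boxtimes$ interacts with the $\eta$ of $\sigma_2\calt\sigma_1=$ the measure with $\eta=\eta_{\sigma_2}\circ\eta_{\sigma_1}$. This last compatibility — relating $\Sigma_{(\sigma_2\calt\sigma_1)\boxtimes\mu}$ to $\Sigma_{\sigma_2\boxtimes(\sigma_1\boxtimes\mu)}$-type expressions — is where the real content sits, and I expect the main obstacle to be bookkeeping the variable substitutions between the $\boxtimes$-world (multiplicative $S$-transforms) and the $\calt$-world (composition of $\eta$'s) so that associativity of $\calt$ and multiplicativity of $S$ mesh correctly; once the substitution is set up, \eqref{eq main2} drops out of uniqueness of multiplicative subordination. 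One should also separately but routinely handle the $\tor$ case, where the same transform identities hold with $\eta$ replaced by its $\tor$-analogue and all the arguments go through verbatim.
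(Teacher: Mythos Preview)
Your general strategy—work with the $\Sigma$- and $\eta$-transforms and exploit the defining relation $\sigma\boxtimes\mu=\sigma\calt\mathbb{B}_\sigma(\mu)$—is correct, and you have even written down the right $\eta$-level consequence $\eta_{\mathbb{B}_\sigma(\mu)}=\eta_\sigma^{-1}\circ\eta_{\sigma\boxtimes\mu}$. What you are missing is the one further simplification that the paper uses as its entire engine: inverting that identity and dividing by $z$, then using $\Sigma_{\sigma\boxtimes\mu}=\Sigma_\sigma\Sigma_\mu$ together with $\Sigma_\sigma(\eta_\sigma(z))=z/\eta_\sigma(z)$, gives the clean formula
\[
\Sigma_{\mathbb{B}_\sigma(\mu)}(z)=\Sigma_\mu(\eta_\sigma(z)).
\]
In your notation this says $\Phi_\sigma(w,a)=a$ and $h_\sigma=\eta_\sigma$; there are no ``auxiliary functions'' to manage. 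With this in hand both claims are three-line computations: for \eqref{eq main1} one gets $\Sigma_{\mathbb{B}_\sigma(\mu_1\boxtimes\mu_2)}=\Sigma_{\mu_1}(\eta_\sigma)\Sigma_{\mu_2}(\eta_\sigma)=\Sigma_{\mathbb{B}_\sigma(\mu_1)}\Sigma_{\mathbb{B}_\sigma(\mu_2)}$, and for \eqref{eq main2} one gets $\Sigma_{\mathbb{B}_{\sigma_1}(\mathbb{B}_{\sigma_2}(\mu))}=\Sigma_{\mathbb{B}_{\sigma_2}(\mu)}\circ\eta_{\sigma_1}=\Sigma_\mu\circ\eta_{\sigma_2}\circ\eta_{\sigma_1}=\Sigma_\mu\circ\eta_{\sigma_2\calt\sigma_1}=\Sigma_{\mathbb{B}_{\sigma_2\calt\sigma_1}(\mu)}$.

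Without isolating that $\Sigma$-identity, your route for \eqref{eq main2} does not close. The chain you compute collapses only to $\eta_{\sigma_2}\circ\eta_{\sigma_1\boxtimes\mathbb{B}_{\sigma_2}(\mu)}$, and you then need this to equal $\eta_{(\sigma_2\calt\sigma_1)\boxtimes\mu}$. That ``compatibility'' is not a general interaction law between $\calt$ and $\boxtimes$ at the measure level; it is essentially equivalent to the statement you are trying to prove, and the only way to see it is to pass to $\Sigma$-transforms, which brings you back to the identity above. So the gap is not bookkeeping but the missing reduction $\Sigma_{\mathbb{B}_\sigma(\mu)}=\Sigma_\mu\circ\eta_\sigma$. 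The $\tor$ case is indeed handled the same way on a neighborhood of $0$ when the means are nonzero, with an approximation (or the Raj~Rao--Speicher extension) for vanishing mean.
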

Unfortunately, we are not able to show (\ref{eq main1}) in the general case $\mu_2\in\mathcal{P}(\R)$, nor (\ref{eq main2}) on $\mathcal{P}(\R)$, since the existence of the multiplicative subordination $\mathbb{B}_\sigma(\mu)$ is not known for the general case $\sigma \in \mathcal{P}(\R_+)\setminus\{\delta_0\}$ and $\mu \in \mathcal{P}(\R)$. 

The measure $\bmsigma_t$ satisfies the relation 
$$
\bmsigma_t\calt \bmsigma_s=\bmsigma_{t+s},~~s,t\geq0.  
$$
Hence, in terms of Theorem \ref{mainthm1}, the property $\mathbb{B}_t\circ\mathbb{B}_s=\mathbb{B}_{t+s}$  is a consequence of the fact that $\{\bmsigma_t\}_{t\geq0}$ forms a 
$\calt$-convolution semigroup.

The transformation $\mathbb{B}_\sigma$ is versatile. For example, we may realize free and Boolean convolution powers (modulo some dilation) by choosing the right measures $\sigma$. For Boolean convolution powers we choose $\sigma=\delta_{1/t}$, $t>0$ and for free convolution powers we choose the Bernoulli law $\sigma=(1-1/t)\delta_0+(1/t)\delta_1$, $t\geq1$. 
Then the relations 
$$
\D_{1/t}(\mu\boxtimes\nu)^{\boxplus t}=\D_{1/t}(\mu)^{\boxplus t} \boxtimes \D_{1/t}(\mu)^{\boxplus t}
$$
and
$$
\D_{1/t}(\mu\boxtimes\nu)^{\uplus t}=\D_{1/t}(\mu)^{\uplus t} \boxtimes \D_{1/t}(\mu)^{\uplus t}, 
$$
which were proved in \cite{BN08}, are just special cases of (\ref{eq main1}). Note here that $\D_t$ is the dilation of a measure by $t$. 

On the other hand, the maps $\mathbb{B}_t$ can describe the laws of a free Brownian motion. Let $F_\mu$ be the reciprocal Cauchy transform and $\eta_\mu$ be the $\eta$-transform. The domains $F_\mu,\eta_\mu$ are respectively $\comp^+,\comp^-$ when $\mu\in\mathcal{P}(\R)$ and $\disc^c,\disc$ when $\mu\in\mathcal{P}(\tor)$. 
 It is shown in \cite{BN08} that the function
$h(t,z):=z-F_{\mathbb{B}_t(\mu)}(z)$ satisfies the complex Burgers equation
$$
\frac{\partial h}{\partial t}=-h(t,z)\frac{\partial h}{\partial z}. 
$$
We can extend this differential equation in terms of $\mathbb{B}_\sigma$. 
\begin{theorem}\label{mainthm2} 
Let $\mu\in\mathcal{P}(\R_+)$ and let $\{\nu_t\}_{t\geq0}$ be a weakly continuous $\calt$-convolution semigroup of probability measures on $\R_+$ or $\tor$ such that $\nu_0=\delta_1$ and let $K$ be its generator defined by 
$$
zK(z)=\frac{d}{dt}\bigg|_0\eta_{\nu_t}(z).  
$$
 Then the function 
$
H(t,z):=z-F_{\mathbb{B}_{\nu_t}}(z) 
$
satisfies 
$$
\frac{\partial H}{\partial t}=-zK\left(\frac{H}{z}\right)\frac{\partial H}{\partial z}.
$$
In terms of $\eta(t,z):=1-F_{\mathbb{B}_{\nu_t}(\mu)}(z)/z$, we have
$$
\frac{\partial \eta}{\partial t}=-K(\eta)\left(-z\frac{\partial \eta}{\partial z}+\eta\right).
$$
\end{theorem}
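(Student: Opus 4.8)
The plan is to combine the composition law \eqref{eq main2} with an infinitesimal analysis of the map $\sigma\mapsto\mathbb{B}_\sigma$ near $\sigma=\delta_1$. Since $\{\nu_t\}_{t\geq0}$ is a $\calt$-convolution semigroup with $\nu_0=\delta_1$ we have $\nu_s\calt\nu_t=\nu_{s+t}$, so \eqref{eq main2} gives $\mathbb{B}_{\nu_{t+s}}(\mu)=\mathbb{B}_{\nu_s}\bigl(\mathbb{B}_{\nu_t}(\mu)\bigr)$ for all $s,t\geq0$. Hence, writing $\phi(t,z):=\eta_{\mathbb{B}_{\nu_t}(\mu)}(z)$, one has $\partial_t\phi(t,z)=G[\mathbb{B}_{\nu_t}(\mu)](z)$ (a priori as a right derivative), where
\[
G[\rho](z):=\frac{d}{ds}\bigg|_0\eta_{\mathbb{B}_{\nu_s}(\rho)}(z),
\]
and so it suffices to compute $G[\rho]$ for an arbitrary $\rho$.

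To describe $\mathbb{B}_\sigma(\rho)$ at the level of transforms, recall the subordination functions $\omega_1,\omega_2$ of the free multiplicative convolution $\sigma\boxtimes\rho$ (see \cite{Biane,BB07}): they satisfy $\eta_{\sigma\boxtimes\rho}=\eta_\sigma\circ\omega_1=\eta_\rho\circ\omega_2$ and the companion identity $\omega_1(z)\,\omega_2(z)=z\,\eta_{\sigma\boxtimes\rho}(z)$. Since $\eta_{\alpha\calt\beta}=\eta_\alpha\circ\eta_\beta$, relation \eqref{subordination0} reads $\eta_{\sigma\boxtimes\rho}=\eta_\sigma\circ\eta_{\mathbb{B}_\sigma(\rho)}$; comparing this with the first subordination identity lets us take $w:=\eta_{\mathbb{B}_\sigma(\rho)}=\omega_1$. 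Then $\omega_2(z)=z\,\eta_\sigma(w(z))/w(z)$, and the second subordination identity $\eta_\rho\circ\omega_2=\eta_\sigma\circ\omega_1$ becomes the implicit relation
\[
\eta_\rho\!\left(\frac{z\,\eta_\sigma(w(z))}{w(z)}\right)=\eta_\sigma\bigl(w(z)\bigr),\qquad w=\eta_{\mathbb{B}_\sigma(\rho)}.
\]

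Now set $\sigma=\nu_s$ and $w_s:=\eta_{\mathbb{B}_{\nu_s}(\rho)}$. Since $\nu_0=\delta_1$ gives $\eta_{\nu_0}=\mathrm{id}$ and $\mathbb{B}_{\delta_1}(\rho)=\rho$ ($\delta_1$ being neutral for both $\boxtimes$ and $\calt$), we have $w_0=\eta_\rho$. Inserting $\eta_{\nu_s}(\zeta)=\zeta+s\,\zeta K(\zeta)+o(s)$ (the definition of $K$) and $w_s=w_0+s\dot w+o(s)$, a brief expansion shows that the inner argument in the implicit relation satisfies
\[
\frac{z\,\eta_{\nu_s}(w_s(z))}{w_s(z)}=z+s\,zK\bigl(w_0(z)\bigr)+o(s),
\]
the first-order variation of $w_s$ cancelling between numerator and denominator. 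Differentiating the implicit relation at $s=0$ and solving for $\dot w$ then yields
\[
G[\rho](z)=K\bigl(\eta_\rho(z)\bigr)\bigl(z\,\eta_\rho'(z)-\eta_\rho(z)\bigr),
\]
so with $\rho=\mathbb{B}_{\nu_t}(\mu)$ we obtain $\partial_t\phi=K(\phi)\bigl(z\,\partial_z\phi-\phi\bigr)$. Finally, from $\eta_\tau(z)=1-zF_\tau(1/z)$ one has $H(t,z)=z-F_{\mathbb{B}_{\nu_t}(\mu)}(z)=z\,\phi(t,1/z)$ and $\eta(t,z)=H(t,z)/z=\phi(t,1/z)$, and the chain rule under $z\mapsto1/z$ together with multiplication by $z$ turns the equation for $\phi$ into $\partial_tH=-zK(H/z)\,\partial_zH$; the substitution $H=z\,\eta$ then gives the stated equation for $\eta$. (As a check, $\nu_t=\bmsigma_t$ gives $\eta_{\bmsigma_t}(z)=z/(1-tz)$, hence $K(z)=z$ and $\partial_tH=-H\,\partial_zH$, the complex Burgers equation of \cite{BN08}.)

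The formal manipulations above are routine; the main obstacle is their rigorous justification. One must show that $s\mapsto\eta_{\mathbb{B}_{\nu_s}(\rho)}(z)$ is right-differentiable at $s=0$ with derivative $G[\rho](z)$, locally uniformly in $z$, so that the expansions are legitimate and the resulting $G[\rho](\cdot)$ is analytic. This rests on the weak continuity of $\{\nu_s\}$ (whence $\eta_{\nu_s}\to\mathrm{id}$ locally uniformly as $s\to0$), the continuous -- in fact analytic -- dependence of the subordination functions $\omega_1,\omega_2$ on the input measures, and a normal-families (Vitali) argument to upgrade pointwise convergence of the difference quotients to locally uniform convergence. Combined with the weak continuity of $t\mapsto\mathbb{B}_{\nu_t}(\mu)$, this makes $\phi$ of class $C^1$ in $t$, so that the asserted PDE holds in the classical sense on the relevant domain. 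The cases $\nu_t\in\mathcal{P}(\R_+)$ and $\nu_t\in\mathcal{P}(\tor)$ are handled identically, using the domains of $F$ and $\eta$ recalled in the introduction.
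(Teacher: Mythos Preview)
Your argument is correct and reaches the same PDE as the paper, but the route is genuinely different. The paper does \emph{not} invoke the composition law \eqref{eq main2}; instead it works on the inverse side. From the characterization $\Sigma_{\mathbb{B}_{\nu_t}(\mu)}(z)=\Sigma_\mu(\eta_{\nu_t}(z))$, equivalently $\eta^{-1}(t,z)=z\,\Sigma_\mu(\eta_{\nu_t}(z))$, it computes $\partial_t\eta^{-1}$ and $\partial_z\eta^{-1}$, forms the ratio $\partial_t\eta^{-1}/(\partial_z\eta^{-1}-\eta^{-1}/z)$, and uses the semigroup PDE $\partial_t\eta_{\nu_t}=zK(z)\,\partial_z\eta_{\nu_t}$ to identify this ratio as $zK(z)$; the implicit-function identities then convert from $\eta^{-1}$ back to $\eta$, and finally to $H$. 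Your approach instead uses \eqref{eq main2} to write $\phi(t+s,\cdot)=\eta_{\mathbb{B}_{\nu_s}(\mathbb{B}_{\nu_t}(\mu))}$ and reduces everything to computing the generator $G[\rho]=\frac{d}{ds}\big|_0\eta_{\mathbb{B}_{\nu_s}(\rho)}$; you then read off $G[\rho]$ by a first-order expansion of the implicit relation $\eta_\rho\bigl(z\,\eta_\sigma(w)/w\bigr)=\eta_\sigma(w)$, $w=\eta_{\mathbb{B}_\sigma(\rho)}$. This implicit relation is nothing but the $\Sigma$-identity rewritten after substituting $z\mapsto w(\zeta)$, so the two proofs encode the same algebraic content; the difference is that you linearize in $s$ directly on the $\eta$-side, while the paper differentiates on the $\eta^{-1}$-side and then inverts. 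Your packaging makes the semigroup/generator structure explicit and avoids handling the compositional inverse; the paper's is a touch shorter and self-contained (it does not need Theorem~\ref{mainthm1}). Both arguments leave the same analytic justifications (existence and local uniformity of the $t$-derivative) at the formal level, as you correctly flag.
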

We have not been able to find out a noncommutative stochastic process whose marginal distributions are described by this differential equation.  Finding such an example may be an interesting question.

A similar idea works for additive free convolution. Let $\mathbb{A}_\nu(\mu)$ denote the probability measure (called the \emph{additive subordination}) characterized by  
\begin{equation}\label{subordination2}
\sigma\boxplus \mu=\sigma\rhd\mathbb{A}_\sigma(\mu),~~~\mu,\sigma\in\mathcal{P}(\R), 
\end{equation}
where $\rhd$ is monotone convolution. 
The additive subordination $\mathbb{A}_{\sigma}(\mu)$ exists for any $\mu,\sigma\in\mathcal{P}(\R)$; see \cite{BB07, Biane} for more details and \cite{L07} for an operator model. The additive subordination was originally introduced in \cite{V93}. 
\begin{theorem}\label{mainthm3} 
For $\mu_1, \mu_2,\sigma_1,\sigma_2,\sigma\in\mathcal{P}(\R)$, we have 
$$
\mathbb{A}_\sigma(\mu_1\boxplus\mu_2)=\mathbb{A}_\sigma(\mu_1)\boxplus\mathbb{A}_\sigma(\mu_2)
$$
and 
$$
\mathbb{A}_{\sigma_1}\circ\mathbb{A}_{\sigma_2}=\mathbb{A}_{\sigma_2\rhd\sigma_1}\text{~on~}\mathcal{P}(\R).  
$$
\end{theorem}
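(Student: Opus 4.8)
The plan is to deduce both identities from a single ``mixed associativity'' relation,
\begin{equation}\label{mixedassoc}
\sigma_2\rhd\big(\sigma_1\boxplus\mathbb{A}_{\sigma_2}(\mu)\big)=(\sigma_2\rhd\sigma_1)\boxplus\mu,\qquad \sigma_1,\sigma_2,\mu\in\mathcal{P}(\R),
\end{equation}
and to establish \eqref{mixedassoc} by transform methods. I shall use freely that $\rhd$ is associative (because $F_{\rho_1\rhd\rho_2}=F_{\rho_1}\circ F_{\rho_2}$) and that $\rhd$ is \emph{left cancellative}: if $F_\sigma\circ F_{\tau_1}=F_\sigma\circ F_{\tau_2}$ on $\comp^+$ then, since $F_\sigma$ is univalent on a truncated cone at $\infty$ while $F_{\tau_i}(z)=z(1+o(1))$ nontangentially, we get $F_{\tau_1}=F_{\tau_2}$ on such a cone, hence everywhere; thus $\mathbb{A}_\sigma(\mu)$ is the \emph{unique} $\tau\in\mathcal{P}(\R)$ with $\sigma\rhd\tau=\sigma\boxplus\mu$. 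Granting \eqref{mixedassoc}, Theorem~\ref{mainthm3} is immediate. For the first identity, apply \eqref{mixedassoc} with $\sigma_1$ replaced by $\mathbb{A}_\sigma(\mu_1)$ and $\mu$ by $\mu_2$:
\[
\sigma\rhd\big(\mathbb{A}_\sigma(\mu_1)\boxplus\mathbb{A}_\sigma(\mu_2)\big)=\big(\sigma\rhd\mathbb{A}_\sigma(\mu_1)\big)\boxplus\mu_2=\sigma\boxplus\mu_1\boxplus\mu_2=\sigma\rhd\mathbb{A}_\sigma(\mu_1\boxplus\mu_2),
\]
so $\mathbb{A}_\sigma(\mu_1)\boxplus\mathbb{A}_\sigma(\mu_2)=\mathbb{A}_\sigma(\mu_1\boxplus\mu_2)$ by uniqueness. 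For the second, put $\tau:=\mathbb{A}_{\sigma_1}(\mathbb{A}_{\sigma_2}(\mu))$; by associativity of $\rhd$, the defining relation of $\mathbb{A}_{\sigma_1}$, and then \eqref{mixedassoc},
\[
(\sigma_2\rhd\sigma_1)\rhd\tau=\sigma_2\rhd\big(\sigma_1\rhd\mathbb{A}_{\sigma_1}(\mathbb{A}_{\sigma_2}(\mu))\big)=\sigma_2\rhd\big(\sigma_1\boxplus\mathbb{A}_{\sigma_2}(\mu)\big)=(\sigma_2\rhd\sigma_1)\boxplus\mu,
\]
so $\tau=\mathbb{A}_{\sigma_2\rhd\sigma_1}(\mu)$ by uniqueness.

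To prove \eqref{mixedassoc} I would pass to reciprocal Cauchy transforms and invoke the subordination functions of free additive convolution (\cite{Biane,BB07}): for $\rho,\mu\in\mathcal{P}(\R)$ there is a unique $\omega_{\rho,\mu}\colon\comp^+\to\comp^+$ with $F_{\rho\boxplus\mu}=F_\rho\circ\omega_{\rho,\mu}$ and $\omega_{\rho,\mu}(z)=z(1+o(1))$ nontangentially. Cancelling $F_{\sigma_2}$ in $F_{\sigma_2}\circ F_{\mathbb{A}_{\sigma_2}(\mu)}=F_{\sigma_2\boxplus\mu}=F_{\sigma_2}\circ\omega_{\sigma_2,\mu}$ identifies $F_{\mathbb{A}_{\sigma_2}(\mu)}=\omega_{\sigma_2,\mu}$. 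Since $F_{\sigma_2\rhd\sigma_1}=F_{\sigma_2}\circ F_{\sigma_1}$, applying $F$ to both sides of \eqref{mixedassoc} and cancelling the common left factor $F_{\sigma_2}\circ F_{\sigma_1}$ reduces \eqref{mixedassoc} to the identity of subordination functions
\begin{equation}\label{subordeq}
\omega_{\sigma_1,\,\nu}=\omega_{\sigma_2\rhd\sigma_1,\,\mu},\qquad \nu:=\mathbb{A}_{\sigma_2}(\mu).
\end{equation}

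For \eqref{subordeq} I would use the fixed-point description of subordination functions from \cite{BB07}. With $h_\rho:=F_\rho-\mathrm{id}$ (analytic, $\Im h_\rho\ge0$ on $\comp^+$), for each $z\in\comp^+$ the value $F_\nu(z)=\omega_{\sigma_2,\mu}(z)$ is the unique fixed point in $\comp^+$ of $v\mapsto z+h_\mu\big(z+h_{\sigma_2}(v)\big)$, and $\omega_{\sigma_1,\nu}(z)$ is the unique fixed point of $v\mapsto z+h_\nu\big(z+h_{\sigma_1}(v)\big)$. Set $\Omega:=\omega_{\sigma_2\rhd\sigma_1,\mu}$ with companion $\widetilde\Omega$, so that $F_{\sigma_2\rhd\sigma_1}\circ\Omega=F_\mu\circ\widetilde\Omega$ and $\Omega+\widetilde\Omega=\mathrm{id}+F_{\sigma_2\rhd\sigma_1}\circ\Omega$, and let $\psi(z):=z+h_{\sigma_1}(\Omega(z))$ (an analytic self-map of $\comp^+$ with $\Im\psi\ge\Im z$). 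The crucial point is the telescoping identity
\[
h_{\sigma_1}(\Omega(z))+h_{\sigma_2}\!\big(F_{\sigma_1}(\Omega(z))\big)=F_{\sigma_2\rhd\sigma_1}(\Omega(z))-\Omega(z)=h_{\sigma_2\rhd\sigma_1}(\Omega(z)),
\]
where the $F_{\sigma_1}(\Omega(z))$ terms cancel in the middle. Feeding this into the two relations that define $(\Omega,\widetilde\Omega)$, a short calculation shows that $F_{\sigma_1}(\Omega(z))$ is a fixed point in $\comp^+$ of $v\mapsto \psi(z)+h_\mu\big(\psi(z)+h_{\sigma_2}(v)\big)$, hence $F_{\sigma_1}(\Omega(z))=F_\nu(\psi(z))$ by uniqueness; equivalently $(\Omega,\psi)$ is the subordination pair of $\sigma_1\boxplus\nu$, so $\Omega=\omega_{\sigma_1,\nu}$, which is \eqref{subordeq}.

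The step I expect to be the main obstacle is the control of degenerate configurations, in which uniqueness of the relevant fixed point in $\comp^+$ --- the backbone of the previous paragraph --- can fail; this happens precisely when one of $\mu,\sigma_1,\sigma_2$ is a point mass $\delta_a$. In those cases I would verify \eqref{mixedassoc} directly, using $F_{\rho\ast\delta_a}(z)=F_\rho(z-a)$ and $\mathbb{A}_\sigma(\delta_a)=\delta_a$, after which every transform in \eqref{mixedassoc} collapses to $F_{\sigma_2}\circ F_{\sigma_1}$ composed with a real translation and the two sides plainly agree. One could also look for an operator model realizing $\sigma_2\rhd\sigma_1$ through monotone independence together with a free copy of $\mu$ (in the spirit of the operator models in \cite{L07}), but the interaction between monotone and free independence appears harder to manage than the analytic argument above, which I would keep as the main line.
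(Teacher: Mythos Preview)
Your argument is correct, but it takes a much longer route than the paper's. The paper's proof rests on the single formula
\[
\phi_{\mathbb{A}_\sigma(\mu)}(z)=\phi_\mu\bigl(F_\sigma(z)\bigr),
\]
obtained by inverting $F_\sigma\circ F_{\mathbb{A}_\sigma(\mu)}=F_{\sigma\boxplus\mu}$ in a truncated cone. Both identities are then one-line computations: additivity of $\phi$ under $\boxplus$ gives the homomorphism property, and $F_{\sigma_2}\circ F_{\sigma_1}=F_{\sigma_2\rhd\sigma_1}$ gives the composition rule. No fixed-point machinery, no case analysis for point masses, no mixed associativity lemma.

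Your approach instead proves the identity $\omega_{\sigma_1,\mathbb{A}_{\sigma_2}(\mu)}=\omega_{\sigma_2\rhd\sigma_1,\mu}$ of subordination functions via the Denjoy--Wolff fixed-point description from \cite{BB07}, packages it as the mixed associativity \eqref{mixedassoc}, and then derives both statements from that. Note that your \eqref{subordeq} is literally the composition identity rewritten (since $F_{\mathbb{A}_\rho(\tau)}=\omega_{\rho,\tau}$), so in effect you prove the second statement directly by the fixed-point argument and then deduce the first from it via left-cancellativity of $\rhd$. That structural implication (second identity $\Rightarrow$ first) is a pleasant byproduct, and working entirely in $\comp^+$ rather than in inverse domains $\Gamma_{\lambda,M}$ is aesthetically clean; but you pay for it with the telescoping computation, the verification that $(\Omega,\psi)$ is a genuine subordination pair, and the separate treatment of point-mass degeneracies. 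The paper's $\phi$-transform formula handles all cases uniformly in three lines.
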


\begin{theorem}\label{mainthm4} 
Let $\mu\in\mathcal{P}(\R)$ and let $\{\nu_t\}_{t\geq0}$ be a weakly continuous $\rhd$-convolution semigroup of measures on $\R$ 
and let $J$ be its generator defined by 
$$
J(z)=\frac{d}{dt}\bigg|_0F_{\nu_t}(z). 
$$
Then the function
$
G(t,z):=z-F_{\mathbb{A}_{\nu_t}(\mu)}(z) 
$
satisfies 
$$
\frac{\partial G}{\partial t}=J(z-G)\frac{\partial G}{\partial z}.
$$
\end{theorem}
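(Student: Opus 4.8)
The plan is to collapse everything into one implicit functional equation for the reciprocal Cauchy transform of $\mathbb{A}_{\nu_t}(\mu)$ and then differentiate it. Write $\Omega_t:=F_{\mathbb{A}_{\nu_t}(\mu)}$ and $'=\partial/\partial z$, so that $G(t,z)=z-\Omega_t(z)$ and, since $z-G=\Omega_t$, $\partial_z G=1-\Omega_t'$, $\partial_t G=-\partial_t\Omega_t$, the asserted identity $\partial_t G=J(z-G)\,\partial_z G$ is equivalent to $\partial_t\Omega_t(z)=J(\Omega_t(z))\big(\Omega_t'(z)-1\big)$. I would first record the two standard analytic facts driving the argument. (i) Monotone convolution linearizes to composition, $F_{\sigma\rhd\rho}=F_\sigma\circ F_\rho$; applied to the semigroup hypothesis this gives $F_{\nu_{t+s}}=F_{\nu_t}\circ F_{\nu_s}$ with $F_{\nu_0}=\mathrm{id}$, and differentiating in $s$ at $s=0$ yields $\partial_t F_{\nu_t}(w)=F_{\nu_t}'(w)\,J(w)$. (ii) For free additive convolution the Voiculescu--Biane subordination functions $\omega_1,\omega_2$ of $\mu\boxplus\nu_t$ satisfy $F_\mu(\omega_1)=F_{\nu_t}(\omega_2)=F_{\mu\boxplus\nu_t}$ and $\omega_1+\omega_2=z+F_{\mu\boxplus\nu_t}$ (see \cite{BB07}); on the other hand the defining relation $\nu_t\boxplus\mu=\nu_t\rhd\mathbb{A}_{\nu_t}(\mu)$ means $F_{\mu\boxplus\nu_t}=F_{\nu_t}\circ\Omega_t$, which by uniqueness of subordination functions identifies $\Omega_t$ with $\omega_2$. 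Eliminating $\omega_1=z+F_{\nu_t}(\Omega_t)-\Omega_t$ from $F_{\mu\boxplus\nu_t}=F_\mu(\omega_1)$ then produces the single equation
\[
F_{\nu_t}\big(\Omega_t(z)\big)=F_\mu\big(z+F_{\nu_t}(\Omega_t(z))-\Omega_t(z)\big),
\]
which I will call $(\star)$.

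Next I would differentiate $(\star)$ in $z$ and in $t$. Put $p:=F_{\nu_t}'(\Omega_t(z))$ and $q:=F_\mu'\big(z+F_{\nu_t}(\Omega_t(z))-\Omega_t(z)\big)$. The $z$-derivative gives $p\,\Omega_t'=q\big(1+(p-1)\Omega_t'\big)$, i.e.\ $\Omega_t'(p+q-pq)=q$. For the $t$-derivative, using (i) together with the chain rule in the form $\partial_t\big[F_{\nu_t}(\Omega_t(z))\big]=p\big(J(\Omega_t)+\partial_t\Omega_t\big)$, one gets $p\big(J(\Omega_t)+\partial_t\Omega_t\big)=q\big(p(J(\Omega_t)+\partial_t\Omega_t)-\partial_t\Omega_t\big)$, which rearranges to $J(\Omega_t)=\partial_t\Omega_t\cdot\dfrac{p+q-pq}{p(q-1)}$. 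Substituting $p+q-pq=q/\Omega_t'$ from the first relation and using the elementary identity $\Omega_t'-1=\dfrac{p(q-1)}{p+q-pq}$ (a direct consequence of $\Omega_t'=\dfrac{q}{p+q-pq}$) converts this into $\partial_t\Omega_t=J(\Omega_t)(\Omega_t'-1)$, which is the claim after translating back to $G$.

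The algebra above is routine; the real work is regularity. I would first run the whole argument on a truncated Stolz angle $\{\,\mathrm{Im}\,z>R,\ |\mathrm{Re}\,z|<R\,\mathrm{Im}\,z\,\}$, where $F_\mu(z)=z+o(1)$, $\Omega_t(z)=z+o(1)$ and $F_{\nu_t}(z)=z+o(1)$ uniformly for $t$ in compact sets, so that $p,q$ stay close to $1$ (in particular $p+q-pq\neq0$ and $q\neq1$); then the implicit function theorem applied to $(\star)$, combined with the $C^1$-dependence of $t\mapsto F_{\nu_t}$ coming from (i), yields joint differentiability of $(t,z)\mapsto\Omega_t(z)$ and hence the displayed PDE on that region. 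Finally, since for each fixed $t$ both sides of the PDE are analytic in $z$ on $\comp^+$, the identity propagates to all of $\comp^+$ by the identity theorem, and the differentiability of $t\mapsto\Omega_t(z)$ likewise extends (here the composition semigroup property $\mathbb{A}_{\nu_{t+s}}=\mathbb{A}_{\nu_s}\circ\mathbb{A}_{\nu_t}$, a special case of Theorem~\ref{mainthm3}, and the theory of one-parameter semigroups of analytic self-maps of $\comp^+$ are the convenient tools). I expect establishing this regularity, i.e.\ that $t\mapsto\Omega_t(z)$ is differentiable with $\partial_tF_{\nu_t}(w)=F_{\nu_t}'(w)J(w)$ valid on all of $\comp^+$, to be the main obstacle; once it is in place the theorem follows from $(\star)$ by the calculation above.
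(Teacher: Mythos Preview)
Your argument is correct, but it is organized differently from the paper's and ends up doing more work. The paper does not go through Biane's pair $(\omega_1,\omega_2)$ or the implicit equation $(\star)$; instead it uses directly the characterization $\phi_{\mathbb{A}_{\nu_t}(\mu)}(z)=\phi_\mu(F_{\nu_t}(z))$, which gives the \emph{explicit} formula $F^{-1}(t,z)=z+\phi_\mu(F_{\nu_t}(z))$ for $F:=\Omega_t$. From this one reads off immediately that $\dfrac{\partial_t F^{-1}}{\partial_z F^{-1}-1}=\dfrac{\phi_\mu'(F_{\nu_t})\,\partial_t F_{\nu_t}}{\phi_\mu'(F_{\nu_t})\,\partial_z F_{\nu_t}}=J(z)$, and then the usual inversion identities for $\partial_t F^{-1},\partial_z F^{-1}$ yield $\partial_t F=J(F)(\partial_z F-1)$, i.e.\ your target equation, in two lines. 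Your $(\star)$ is actually equivalent to that explicit formula (substitute $z\mapsto F^{-1}(w)$ and apply $F_\mu^{-1}$), so the two routes are the same computation viewed from the $F$ side versus the $F^{-1}$ side; the $F^{-1}$ side avoids the bookkeeping with $p,q$. On the other hand, you pay more attention to regularity (differentiability in $t$, nondegeneracy of $p+q-pq$ and $q-1$) than the paper, which treats these as formal manipulations.
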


The paper is organized as follows. We show Theorem \ref{mainthm1}, \ref{mainthm2} in Section \ref{sec2} and then give examples, in particular focusing on the Belinschi-Nica semigroup and free and Boolean convolutions powers. 
We then show Theorem \ref{mainthm3}, \ref{mainthm4} and give some examples in Section \ref{sec3}.  

We give more examples of the multiplicative subordination $\mathbb{B}_\sigma$ when $\sigma$ is a beta distribution and Boolean stable law in Section \ref{sec4}. The former example turns out to be related to the Markov transform \cite{K98} and scale mixtures of Boolean stable laws developed in \cite{AHd}. 

Finally we consider additive subordination $\mathbb{A}_\sigma$ when $\sigma$ is a Cauchy distribution and study its properties in Section \ref{sec5}.

\section{Preliminaries}
\subsection{Notations}
We collect basic notations used in this paper. 
\begin{enumerate}[\rm(1)] 

\item $\mathcal{P}(I)$ is the set of Borel probability measures on a set $I$. 
We will use $I=\R, \R_+=[0,\infty), \R_-=(-\infty,0], \tor=\{z\in\comp: |z|=1\}$. 
\item For $a \in \R$, we denote by $\D_a(\mu)$ the dilation of a probability measure $\mu$, i.e.\ if a random variable $X$ follows $\mu$, then $\D_a(\mu)$ is the law of $a X$.

\item For $p \geq0$ and $\mu\in\mathcal{P}(\R_+)$, let $\mu^p$ be the push-forward of $\mu$ by the map $x\mapsto x^p$.

\item $\comp^+, \comp^-$ denote the complex upper half-plane and the lower half-plane, respectively. 
\item $\disc$ denotes the open unit disc of $\comp$ with center 0 and $\tor$ denotes the unit circle of $\comp$. 
\item For $\lambda,M>0$, $\Gamma_{\lambda,M}$ is the truncated cone $\{z\in\comp^+: |\text{Re}(z)| < \lambda\,\text{Im}(z), \text{Im}(z)>M\}$.

\item The logarithm $z\mapsto \log z$ is the principal value defined in $\comp \setminus\R_-$. 

\item For $p \in \R$, the power $z\mapsto z^p$ denotes the principal value $e^{p \log z}$ in $\comp \setminus\R_-$.  
\end{enumerate}

\subsection{Additive convolutions} 
Additive free, monotone and Boolean convolutions are binary operations on the set $\mathcal{P}(\R)$, denoted by $\boxplus, \rhd$ and $\uplus$ respectively. 
Each convolution arises as the sum of (free, monotonically or Boolean) independent random variables, and is characterized in terms of some transforms. 

Let $\mu \in\mathcal{P}(\R)$. Let $G_\mu$ be the \emph{Cauchy transform} $\int_{\R}\frac{1}{z-x}\,\mu(dx)$ and let $F_\mu,\eta_\mu$ be the \emph{reciprocal Cauchy transform} and \emph{$\eta$-transform}:  
\begin{align}
&F_\mu(z) = \frac{1}{G_\mu(z)},&z\in\comp^+,  \\
&\eta_\mu(z) =1-zF_\mu\left(\frac{1}{z}\right),&z\in\comp^-. \label{etaF} 
\end{align}
When $\mu\in\mathcal{P}(\R_+)$, we consider $G_\mu,F_\mu,\eta_\mu$ in $\comp\setminus \R_+$.  
It is known in \cite{BV93} that, for any $\lambda>0$, there exist $\lambda', M, M'>0$ such that $F_\mu$ is univalent in $\Gamma_{\lambda',M'}$ and $F_\mu(\Gamma_{\lambda',M'}) \supset \Gamma_{\lambda,M}$. Then the univalent compositional left inverse $F_\mu^{-1}$ exists in $ \Gamma_{\lambda,M}$, and
 the \emph{Voiculescu transform} of $\mu$ is defined by 
\begin{equation}\label{eq44}
\phi_\mu(z) := F_\mu^{-1}(z)-z,~~z\in\Gamma_{\lambda,M}. 
\end{equation}
The three convolutions are then characterized as follows: 
\begin{flalign}
&\cite{BV93}& &\phi_{\mu_1 \boxplus \mu_2}(z) = \phi_{\mu_1}(z) + \phi_{\mu_2}(z)& &\text{~in some $\Gamma_{\alpha,\beta}$},& \label{boxplus}\\
&\cite{M00} &&F_{\mu_1 \rhd\mu_2}(z) = F_{\mu_1} (F_{\mu_2}(z)),& &z \in \comp^+,& \label{rhd}\\
&\cite{SW97} &&\eta_{\mu_1 \uplus \mu_2}(z) = \eta_{\mu_1}(z) + \eta_{\mu_2}(z),&&z \in \comp^-.& \label{uplus}
\end{flalign}
For  any $\mu\in\mathcal{P}(\R)$, $t\geq1$ and $s\geq0$, probability measures $\mu^{\boxplus t}$ and $\mu^{\uplus s}$ exist such that 
$\phi_{\mu^{\boxplus t}}(z)=t\phi_\mu(z)$ (in the common domain) and that $\eta_{\mu^{\uplus s}}(z)=s\eta_\mu(z)$ in $\comp^-.$ The proofs can be found in \cite{BB05} and \cite{SW97} respectively.


\subsection{Multiplicative convolutions on $\R_+$}
Multiplicative free \cite{V87,BV93} and monotone \cite{B05,F09a} convolutions are binary operations on $\mathcal{P}(\R_+)$, denoted by $\mu\boxtimes\nu$ and $\mu\calt\nu$, respectively. 
Let $X, Y$ be positive random variables with distributions $\mu$ and $\nu$ respectively. Then  $\mu\boxtimes\nu$ is the law of $X^{1/2}YX^{1/2}$ when $X,Y$ are free, and $\mu\calt\nu$ is the law of $X^{1/2}YX^{1/2}$ when $X-1,Y-1$ are monotonically independent. We can extend the definition for $\mu \in\mathcal{P}(\R_+)$ and an arbitrary $\nu\in \mathcal{P}(\R)$; see \cite{BV93} and \cite{AHd}.  

It is always true that $\eta_\mu(-0)=0$. If moreover $\delta_0\neq \mu  \in \mathcal{P}(\R_+)$, the function $\eta_\mu$ is strictly increasing in $(-\infty, 0)$, so that 
one can define $\eta_\mu^{-1}(z)$ and the \emph{$\Sigma$-transform}
\begin{equation}\label{sigma}
\Sigma_\mu(z) :=\frac{\eta_\mu^{-1}(z)}{z}
\end{equation}
in the interval $(\eta_\mu(-\infty),0)$. 
Suppose that $\mu,\nu\in\mathcal{P}(\R_+)$ and $\mu \neq \delta_0 \neq \nu$. 
Then the multiplicative free and monotone convolutions are characterized by 
\begin{flalign}
&\cite{BV93} &&\Sigma_{\mu \boxtimes \nu}(z) = \Sigma_\mu(z) \Sigma_\nu(z),~~~~~\,\,z\in (\alpha,0),&& \label{boxtimes}\\
&\cite{B05} && \eta_{\mu \calt\nu}(z) = \eta_{\mu} (\eta_{\nu}(z)),~~~~~z \in (-\infty,0),&& \label{calt}
\end{flalign}
where $\alpha:=\max\{\eta_{\mu_1}(-\infty), \eta_{\mu_2}(-\infty) \}$. 

For $\delta_0\neq \mu\in\mathcal{P}(\R_-)$, the formula (\ref{boxtimes}) is still true if we define $\Sigma_\mu(z):=-\Sigma_{\D_{-1}(\mu)}(z)$ in $(\eta_{\D_{-1}(\mu)}(-\infty),0)$. This definition of $\Sigma_{\mu}$ is compatible with the property 
$\Sigma_{\D_s(\tau)}(z)=\frac{1}{s}\Sigma_{\tau}(z)$ for $s>0$, $\delta_0\neq\tau\in\mathcal{P}(\R_+).$  Moreover, the formula (\ref{boxtimes}) may be extended to the case where both $\mu\in\mathcal{P}(\R_+), \nu\in\mathcal{P}(\R)$ have compact supports \cite{V87,RS07} and the case where $\mu\in\mathcal{P}(\R_+)$ and $\nu$ is symmetric \cite{APA09}. The formula~(\ref{calt}) can be extended to the general case $\mu \in \mathcal{P}(\R_+),\nu\in\mathcal{P}(\R)$ \cite{AHd}.

If $\delta_0\neq\mu \in \mathcal{P}(\R_+)$, then the convolution power $\mu^{\boxtimes t} \in \mathcal{P}(\R_+)$, satisfying $\Sigma_{\mu^{\boxtimes t}}(z)=(\Sigma_\mu(z))^t$, is well defined for any $ t \geq 1$ \cite{BB05}.

\subsection{Multiplicative convolutions on $\tor$}
Multiplicative free and monotone convolutions are also defined on $\mathcal{P}(\tor)$, denoted by the same symbols $\boxtimes$ and $\calt$ respectively (see \cite{V87} and \cite{B05}). 
Let $U$ and $V$ unitaries with distributions $\mu$ and $\nu$, respectively. The measure $\mu\boxtimes\nu$ is the distribution of $UV$ when 
$U$ and $V$ are free, and $\mu\calt \nu$ is the distribution of $UV$ when $U-1$ and $V-1$ are monotonically independent.   

Let $\mu\in\mathcal{P}(\tor)$. Now we consider $G_\mu(z)$ and $F_\mu(z)$ outside the unit disc $\disc$, and $\eta_\mu(z) =1-zF_\mu\left(\frac{1}{z}\right)$ in the unit disc $\disc$.  
Let $m_1(\mu):=\int_{\tor} w\, d\mu(w) \neq0$ be the first moment of $\mu$. The function $\eta_\mu$ has a convergent series expansion $\eta_\mu(z)=m_1(\mu)z+ o(z)$, and so  
one can define $\eta_\mu^{-1}(z)$ and 
\begin{equation}\label{sigma}
\Sigma_\mu(z) :=\frac{\eta_\mu^{-1}(z)}{z}
\end{equation}
in a neighborhood of  $0$. 
Suppose that $m_1(\mu), m_1(\nu)\neq0$. Then the multiplicative free convolution is characterized by \cite{BV93}
\begin{equation}
 \Sigma_{\mu \boxtimes \nu}(z) = \Sigma_\mu(z) \Sigma_\nu(z)~~\text{in a neighborhood of 0}.\label{boxtimes2}
\end{equation}
The multiplicative monotone convolution of $\mu,\nu \in \mathcal{P}(\tor)$ is characterized by \cite{B05}
\begin{equation}
  \eta_{\mu \calt\nu}(z) = \eta_{\mu} (\eta_{\nu}(z)),~~~~~z \in \disc. \label{calt2}
\end{equation}

\subsection{Positive stable, free Poisson and beta distributions}\label{subsection stable}

Let $\alpha \in(0,1]$. We denote by $\mathbf{b}_\alpha^+, \mathbf{f}_\alpha^+,\mathbf{m}_\alpha^+$ Boolean, free and monotone \emph{positive stable distributions}, respectively. They are characterized as (\ref{booleeta}), (\ref{phifree}), (\ref{F monotone}).  Similarly, we denote by  
 $\mathbf{b}_\alpha^-, \mathbf{f}_\alpha^-, \mathbf{m}_\alpha^-$ the negative stable distributions (i.e.\ the reflection of positive ones regarding the point $0$). 

Let $\MP$ be the free Poisson distribution with density 
$$
\frac{1}{2\pi}\sqrt{\frac{4-x}{x}}1_{(0,4)}(x)\,dx. 
$$
Let $\Beta_{p,q}$ be the beta distribution with parameters $p, q>0$: 
$$
\Beta_{p,q}(dx)= \frac{1}{B(p,q)}x^{p-1}(1-x)^{q-1} 1_{(0,1)}(x)\,dx, 
$$
where $B(p,q)$ is the beta function. 
In particular we put $\Beta_\alpha:=\Beta_{\alpha,1-\alpha}$ for $\alpha \in(0,1)$. By the weak continuity, we define $\Beta_{0}=\delta_0$ and $\Beta_{1}=\delta_1$.  

Useful relations are summarized here.   
\begin{align}
&F_{\mathbf{b}_{\alpha}^+}(z)= z-(-z)^{1-\alpha}, & (\text{see~} \cite{SW97})\\ 
&\eta_{\mathbf{b}_{\alpha}^+}(z)=-(-z)^{\alpha}, \label{booleeta}\\
&\Sigma_{\mathbf{b}_{\alpha}^+}(z)=(-z)^{\frac{1-\alpha}{\alpha}}, \label{sigmaboole} \\ 
&\phi_{\mathbf{f}_\alpha^+}(z)= (-z)^{1-\alpha},&(\text{see~} \cite{BV93}) \label{phifree} \\
&\Sigma_{\MP}(z) =1-z, \\
&F_{\Beta_\alpha}(z) = z\left(1-\frac{1}{z}\right)^\alpha, & (\text{see~} \cite{Ha}) \\
&\eta_{\Beta_\alpha}(z)=1-(1-z)^\alpha,  \label{betaeta} \\
&F_{\mathbf{m}_\alpha^+}(z)=-((-z)^\alpha +1)^{1/\alpha}, & (\text{see~} \cite{H10}) \label{F monotone}\\
&\eta_{\mathbf{m}_\alpha^+}(z)= 1-((-z)^\alpha+1)^{1/\alpha}, \\
&\Sigma_{\mathbf{m}_{\alpha}^+}(z)=\frac{((1-z)^\alpha-1)^{1/\alpha}}{-z},\label{sigmamonotone}
\end{align}
for $z<0$. Moreover, for $\mu \in\mathcal{P}(\R_+)$, the following hold: 
\begin{align}
&G_{\D_{-1}(\mu)}(z)=-G_\mu(-z), ~~~z\in \comp\setminus \R_-; \\
&\eta_{\D_{-1}(\mu)}(z)=\eta_\mu(-z), ~~~z\in \comp\setminus \R_-; \\
&\Sigma_{\D_{-1}(\mu)}(z)=-\Sigma_\mu(z), ~~~z\in (\eta_\mu(\infty),0). 
\end{align}
These formulas give us the characterization of negative stable distributions.

\section{The main results: Multiplication case}\label{sec2}
\subsection{Proof of Theorem \ref{mainthm1}} 
We start from the positive real line case.  Let $\sigma \in\mathcal{P}(\R_+)\setminus\{\delta_0\}, \mu\in\mathcal{P}(\R_+)$. 
The multiplicative subordination $\mathbb{B}_\sigma(\mu)$ is characterized by 
\begin{equation}\label{subordination}
\Sigma_{\mathbb{B}_\sigma(\mu)}(z)=\Sigma_{\mu}(\eta_\sigma(z)),~~z\in(-\alpha,0)
\end{equation}
for some $\alpha>0,$ see \cite{Biane,BB07}.
The homomorphism property then follows easily from Eq.\ (\ref{subordination}): 
\begin{eqnarray*}\Sigma_{\mathbb{B}_\sigma(\mu_1\boxtimes\mu_2)}(z)&=&\Sigma_{\mu_1\boxtimes\mu_2}(\eta_\sigma(z))\\&=&
\Sigma_{\mu_1}(\eta_\sigma(z))\Sigma_{\mu_2}(\eta_\sigma(z))\\&=&\Sigma_{\mathbb{B}_\sigma(\mu_1)}(z)\Sigma_{\mathbb{B}_\sigma(\mu_2)}(z)
\\&=&\Sigma_{\mathbb{B}_\sigma(\mu_1)\boxtimes \mathbb{B}_\sigma(\mu_2)}(z) 
\end{eqnarray*}
in some interval $(-\beta,0)$ and so $\mathbb{B}_\sigma(\mu_1\boxtimes\mu_2)=\mathbb{B}_\sigma(\mu_1)\boxtimes \mathbb{B}_\sigma(\mu_2).$ The compositional relation is proved as follows: 
\begin{eqnarray*}
\Sigma_{\mathbb{B}_{\sigma_1}(\mathbb{B}_{\sigma_2}(\mu))}(z)
&=&\Sigma_{\mathbb{B}_{\sigma_2}(\mu)}(\eta_{\sigma_1}(z))=\Sigma_{\mu}(\eta_{\sigma_2}(\eta_{\sigma_1}(z))\\
&=&\Sigma_{\mu}(\eta_{\sigma_2\calt\sigma_1}(z))=\Sigma_{\mathbb{B}_{\sigma_2\calt\sigma_1}(\mu)}(z)
\end{eqnarray*}
in some interval $(-\gamma,0)$ and hence $\mathbb{B}_{\sigma_1}\circ \mathbb{B}_{\sigma_2} = \mathbb{B}_{\sigma_2\calt\sigma_1}$ on $\mathcal{P}(\R_+)$. 

On the unit circle case, the same computation is valid in suitable neighborhoods of 0 of $\Sigma$-transforms when probability measures have non-vanishing means. If probability measures have vanishing means, then we may use the results of \cite{RS07} or we may resort to an approximation argument by using moments. 

\subsection{Proof of Theorem \ref{mainthm2}}
The measures $\nu_t$ satisfy $\eta_{\nu_t}\circ \eta_{\nu_s} =\eta_{\nu_{t+s}}$ and $\eta_{\nu_0}(z)=z$. By differentiating this compositional relation as regards $s$ at $0$, we obtain the partial differential equation 
\begin{equation}\label{partial1}
\frac{\partial \eta_{\nu_t}}{\partial t}(z)= zK(z)\frac{\partial \eta_{\nu_t}}{\partial z}(z). 
\end{equation}
Let $\eta=\eta(t,z):= \eta_{\mathbb{B}_{\nu_t}(\mu)}(z)$ for simplicity, and let $\eta^{-1}$ denote the compositional inverse with respect to $z$. 
From (\ref{subordination}) and (\ref{partial1}), the partial derivatives $\frac{\partial  \eta^{-1}}{\partial t}$ and $\frac{\partial  \eta^{-1}}{\partial z}$ satisfy the following: 
\begin{equation}\label{eq0001}
\frac{\frac{\partial  \eta^{-1}}{\partial t}(t,z)}{\frac{\partial  \eta^{-1}}{\partial z}(t,z)-\frac{\eta^{-1}(t,z)}{z}}=\frac{z \Sigma_\mu'(\eta_{\nu_t}) \frac{\partial  \eta_{\nu_t}}{\partial t}(t,z)}{z \Sigma_\mu'(\eta_{\nu_t}) \frac{\partial  \eta_{\nu_t}}{\partial z}(t,z)} = zK(z). 
\end{equation}
Differentiate $\eta(t, \eta^{-1}(t,z))=z$ with respect to $t$ and $z$, and then we have 
\begin{equation}\label{eq0002}
\frac{\partial  \eta^{-1}}{\partial t}(t,\eta)=-\frac{\frac{\partial  \eta}{\partial t}(t,z)}{\frac{\partial  \eta}{\partial z}(t,z)},~~~~\frac{\partial  \eta^{-1}}{\partial z}(t,\eta)=\frac{1}{\frac{\partial  \eta}{\partial z}(t,z)}. 
\end{equation}
Replace $z$ by $\eta(t,z)$ in Eq.\ (\ref{eq0001}) and then, thanks to (\ref{eq0002}), we have 
\begin{equation}\label{eq0004}
\frac{\partial  \eta}{\partial t} = - K(\eta) \left(\eta-z\frac{\partial  \eta}{\partial z} \right). 
\end{equation}
By the way, from (\ref{etaF}), the function $H$ can be written as
$$
H(t,z)=z-F_{\mathbb{B}_{\nu_t}}(z) =z \eta(t,1/z), 
$$ 
so that 
\begin{equation}\label{eq0003}
\frac{\partial  H}{\partial t}(t,1/z)=\frac{1}{z}\frac{\partial  \eta}{\partial t}(t,z),~~~\frac{\partial  H}{\partial z}(t,1/z)=\eta(t,z)-z\frac{\partial  \eta}{\partial z}(t,z). 
\end{equation}
Combining (\ref{eq0002}) and (\ref{eq0003}), the desired equation follows.

\subsection{Examples}
We present three examples mentioned in the introduction including the Belinschi-Nica semigroup.
\begin{proposition} 
Recall that the measures $\bmrho_t, \bmsigma_t$ are defined by 
$$
\bmrho_t=(1-t)\delta_0+t \delta_1,~~ \bmsigma_t=\frac{t}{1+t}\delta_0+\frac{1}{1+t}\delta_{1+t}. 
$$
Then 
\begin{eqnarray}
&\mathbb{B}_{\delta_{1/t}}(\mu)=\D_{1/t}(\mu^{\uplus t}),  &t>0, \label{booleanpower}\\
&\mathbb{B}_{\bmrho_{1/t}}(\mu)=\D_{1/t}(\mu^{\boxplus t}),  &t\geq1, \label{freepower}\\ 
&\mathbb{B}_{\bmsigma_t}(\mu)=(\mu^{\boxplus (1+t)})^{\uplus \frac{1}{1+t}},  &t\geq0, \label{BNsemigroup} \\
&\delta_{s} \calt \delta_{t} =\delta_{s t}, &s,t >0, \label{delta1}\\
&\bmrho_{s} \calt\bmrho_t =\bm\rho_{s t}, &s,t>0, \label{rho1}\\
&\bmsigma_{s} \calt\bmsigma_t =\bm\sigma_{s +t}, &s,t\geq 0.\label{sigma1}
\end{eqnarray}
\end{proposition}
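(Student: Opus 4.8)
The plan is to verify each of the six identities separately, in each case reducing to the transform characterizations recorded in the Preliminaries. I would organize the proof around the $\Sigma$-transform and $\eta$-transform formulas, since all of $\mathbb{B}_\sigma$, $\boxtimes$, $\calt$, $\boxplus$, $\uplus$ have clean descriptions there. The key input is Eq.~(\ref{subordination}), $\Sigma_{\mathbb{B}_\sigma(\mu)}(z)=\Sigma_\mu(\eta_\sigma(z))$, together with the scaling relation $\Sigma_{\D_s(\mu)}(z)=\tfrac1s\Sigma_\mu(z)$, the power formulas $\Sigma_{\mu^{\boxtimes t}}=(\Sigma_\mu)^t$, $\eta_{\mu^{\uplus s}}=s\,\eta_\mu$, $\phi_{\mu^{\boxplus t}}=t\,\phi_\mu$, and the $\Sigma$-$\phi$ dictionary coming from $(\ref{etaF})$, $(\ref{eq44})$, $(\ref{sigma})$.

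First I would dispose of the three semigroup identities (\ref{delta1}), (\ref{rho1}), (\ref{sigma1}), which concern only $\calt$ on $\mathcal{P}(\R_+)$. By (\ref{calt}) it suffices to compose $\eta$-transforms. For $\delta_s$ one has $\eta_{\delta_s}(z)=1-zF_{\delta_s}(1/z)=1-z\cdot(s^{-1}z^{-1})\cdots$—more directly, $F_{\delta_s}(w)=w-s$, so $\eta_{\delta_s}(z)=1-z(1/z-s)=sz$; hence $\eta_{\delta_s}\circ\eta_{\delta_t}(z)=\eta_{\delta_s}(tz)=st z=\eta_{\delta_{st}}(z)$, giving (\ref{delta1}). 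For $\bmrho_t$ a short computation gives $F_{\bmrho_t}(w)=\tfrac{w(w-1)}{w-1+t}$ (it is $\tfrac1{G}$ with $G(w)=\tfrac{1-t}{w}+\tfrac{t}{w-1}$), whence $\eta_{\bmrho_t}(z)=\tfrac{tz}{1+(t-1)z}$, and composing these Möbius maps yields $\eta_{\bmrho_s}\circ\eta_{\bmrho_t}=\eta_{\bmrho_{st}}$, which is (\ref{rho1}); similarly for $\bmsigma_t$ one computes $\eta_{\bmsigma_t}(z)=\tfrac{(1+t)z}{1+tz}$ and checks that the composition law for these Möbius transformations is addition in the parameter, giving (\ref{sigma1}). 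The only care needed is to track that these are genuine $\eta$-transforms of probability measures on $\R_+$ (each $\bmrho_t$, $\bmsigma_t$ with $t$ in the stated range is a bona fide element of $\mathcal{P}(\R_+)$) so that (\ref{calt}) applies.

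Next I would handle (\ref{booleanpower})--(\ref{BNsemigroup}) using (\ref{subordination}). For (\ref{booleanpower}): $\Sigma_{\delta_{1/t}}(z)=\eta_{\delta_{1/t}}^{-1}(z)/z$, and since $\eta_{\delta_{1/t}}(z)=z/t$ we get $\eta_{\delta_{1/t}}^{-1}(z)=tz$, so $\Sigma_{\delta_{1/t}}\equiv t$; plugging $\eta_{\delta_{1/t}}(z)=z/t$ into (\ref{subordination}) gives $\Sigma_{\mathbb{B}_{\delta_{1/t}}(\mu)}(z)=\Sigma_\mu(z/t)$. On the other hand, from $(\ref{sigma})$ and $\eta_{\mu^{\uplus t}}=t\,\eta_\mu$ one computes $\Sigma_{\mu^{\uplus t}}(z)=\tfrac1t\Sigma_\mu(z/t)$, and then $\Sigma_{\D_{1/t}(\mu^{\uplus t})}(z)=t\,\Sigma_{\mu^{\uplus t}}(z)=\Sigma_\mu(z/t)$, matching. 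For (\ref{freepower}): I would translate the subordination into the $\phi$/Voiculescu picture, where the free power is literally $\phi_{\mu^{\boxplus t}}=t\phi_\mu$; the relation $\mathbb{B}_{\bmrho_{1/t}}(\mu)=\D_{1/t}(\mu^{\boxplus t})$ is then the classical subordination identity for $\boxplus$ with subordinator $\bmrho_{1/t}$ (this is essentially the content of $\bmrho_{1/t}$ being the measure whose $\Sigma$-transform implements the free $t$-th power after dilation — cf.\ the discussion of free and Boolean powers in the introduction), so the work is to verify via $\eta_{\bmrho_{1/t}}(z)=\tfrac{z}{t+(1-t)z}$ and $(\ref{subordination})$ that $\Sigma_{\mathbb{B}_{\bmrho_{1/t}}(\mu)}(z)=\Sigma_\mu(\eta_{\bmrho_{1/t}}(z))$ equals $\Sigma_{\D_{1/t}(\mu^{\boxplus t})}(z)=t\,\Sigma_{\mu^{\boxplus t}}(z)$, which is a direct comparison of the $\Sigma$-transform of the free power with a reparametrization. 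Finally (\ref{BNsemigroup}) follows by combining: $\mathbb{B}_{\bmsigma_t}=\mathbb{B}_{\bmrho_{1/(1+t)}\calt\,\delta_{1+t}}$ — indeed $\bmrho_{1/(1+t)}\calt\delta_{1+t}=\bmsigma_t$ can be read off the Möbius composition $\eta_{\bmrho_{1/(1+t)}}\circ\eta_{\delta_{1+t}}=\eta_{\bmsigma_t}$ — so by (\ref{eq main2}) (already proved in Theorem~\ref{mainthm1}), $\mathbb{B}_{\bmsigma_t}=\mathbb{B}_{\delta_{1+t}}\circ\mathbb{B}_{\bmrho_{1/(1+t)}}$; by (\ref{booleanpower}) the outer map is $\nu\mapsto\D_{1/(1+t)}(\nu^{\uplus(1+t)})$ and by (\ref{freepower}) the inner map sends $\mu$ to $\D_{1/(1+t)}(\mu^{\boxplus(1+t)})$, and the two dilations combine with $\D_{1/(1+t)}(\rho)^{\uplus(1+t)}=\D_{1/(1+t)}(\rho^{\uplus(1+t)})$ (again from the $\eta$-scaling) to give $(\mu^{\boxplus(1+t)})^{\uplus\frac1{1+t}}$.

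The main obstacle I expect is purely bookkeeping rather than conceptual: keeping the domains of the $\Sigma$- and $\eta$-transforms consistent (each identity is first established on a small interval $(-\alpha,0)$ and then promoted to an identity of measures), and being careful with the dilation sign and scaling conventions $\Sigma_{\D_s(\mu)}=\tfrac1s\Sigma_\mu$ versus $\eta_{\D_s(\mu)}$. A minor subtlety worth flagging explicitly is that $\mu^{\boxplus t}$ is only defined for $t\ge1$ and $\mathbb{B}_{\bmrho_{1/t}}$ correspondingly only for $t\ge1$ (equivalently $\bmrho_{1/t}\in\mathcal{P}(\R_+)$ needs $1/t\le1$), so (\ref{freepower}) and the derivation of (\ref{BNsemigroup}) must stay within that range; everything else goes through for all $s,t>0$ as stated.
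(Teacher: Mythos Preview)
Your approach is exactly the paper's: compute the three $\eta$-transforms, use them to verify (\ref{delta1})--(\ref{sigma1}) via (\ref{calt}); plug into (\ref{subordination}) together with the $\Sigma$-formulas for dilation, Boolean and free powers to get (\ref{booleanpower}) and (\ref{freepower}); then factor $\bmsigma_t=\bmrho_{1/(1+t)}\calt\delta_{1+t}$ and invoke Theorem~\ref{mainthm1} for (\ref{BNsemigroup}).

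Two computational slips to fix. First, your $\eta_{\bmsigma_t}(z)=\tfrac{(1+t)z}{1+tz}$ is wrong (it has $\eta'(0)=1+t$, but $m_1(\bmsigma_t)=1$); the correct expression is $\eta_{\bmsigma_t}(z)=\tfrac{z}{1-tz}$, and with this formula the additive semigroup law (\ref{sigma1}) does follow as you claim. Second, in your derivation of (\ref{BNsemigroup}) the outer map $\mathbb{B}_{\delta_{1+t}}$ is $\nu\mapsto\D_{1+t}(\nu^{\uplus 1/(1+t)})$, not $\nu\mapsto\D_{1/(1+t)}(\nu^{\uplus(1+t)})$ (apply (\ref{booleanpower}) with $t$ replaced by $1/(1+t)$); once corrected, the two dilations $\D_{1+t}$ and $\D_{1/(1+t)}$ cancel and you land on $(\mu^{\boxplus(1+t)})^{\uplus 1/(1+t)}$ as desired. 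These are exactly the bookkeeping hazards you anticipated.
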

\begin{proof}
We can compute the following: 
\begin{eqnarray}
&\eta_{\delta_{1/t}}(z)=\frac{z}{t}, \label{eq0011}\\
&\eta_{\bmrho_{1/t}}(z)= \frac{z}{t+(1-t)z},\label{eq0012}\\
&\eta_{\bmsigma_t}(z)= \frac{z}{1-tz}. \label{eq0013}
\end{eqnarray}
Then (\ref{delta1}) -- (\ref{sigma1}) are easy to prove. 
Moreover, the following formulas are known for $t>0$ and $\mu\in\mathcal{P}(\R_+)$ (\cite{BN08}): 
\begin{align}
&\Sigma_{\mu^{\uplus t}}(z) =\frac{1}{t}\Sigma_\mu\left(\frac{z}{t}\right), \label{eq0014}\\
&\Sigma_{\mu^{\boxplus t}}(z) =\frac{1}{t}\Sigma_\mu\left(\frac{z}{t+(1-t)z}\right), \label{eq0015}\\
&\Sigma_{\D_t(\mu)}(z) =\frac{1}{t}\Sigma_\mu(z). \label{eq0016}
\end{align}

From (\ref{subordination}), (\ref{eq0011}), (\ref{eq0014}) and (\ref{eq0016}), it follows that  
$$
\Sigma_{\mathbb{B}_{\delta_{1/t}}(\mu)}(z)=\Sigma_\mu\left(\frac{z}{t}\right) = \Sigma_{\D_{1/t}(\mu^{\uplus t})}(z). 
$$
The proof of (\ref{freepower}) is similar. 

We can easily show that $\eta_{\bmsigma_t}= \eta_{\bmrho_{1/(1+t)}}\circ \eta_{\delta_{1+t}}$, and hence $\bmsigma_t=\bmrho_{1/(1+t)}\calt\delta_{1+t}$. 
Now Theorem \ref{mainthm1} is applicable: 
$$
\mathbb{B}_{\bmsigma_t}(\mu)= \mathbb{B}_{\delta_{1+t}} (\mathbb{B}_{\bmrho_{1/(1+t)}}(\mu)) = (\mu^{\boxplus (1+t)})^{\uplus \frac{1}{1+t}}. 
$$
Note that the dilation operator $\D_s$ commutes with the free power and Boolean power. 
\end{proof}

The following "commutation relation"  was proved in \cite{BN08} and was crucial in the proof of the semigroup property $\mathbb{B}_t\circ\mathbb{B}_s=\mathbb{B}_{t+s}$. We are able to give a different proof in terms of multiplicative subordination functions. 

\begin{proposition} Let $\mu \in \mathcal{P}(\R)$ and $p, q$ be two real numbers such that $p \geq 1$ and $1-\frac{1}{p} < q$. Then
\begin{equation}
(\mu^{\boxplus p})^{\uplus q}= (\mu^{\uplus q'})^{\boxplus p'},
\end{equation}
where $p', q'$ are defined by $p' := p q/(1 - p + p q)$ and $q' := 1 - p + p q$.
\end{proposition}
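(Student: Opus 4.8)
The plan is to express both the free and the Boolean convolution powers through the multiplicative subordination maps $\mathbb{B}_\sigma$ and to reduce the identity to a single commutation of two elementary measures under monotone multiplicative convolution. I first treat $\mu\in\mathcal{P}(\R_+)$ with $\mu\neq\delta_0$ (the case $\mu=\delta_0$, and the case of a point mass, being trivial). From (\ref{subordination}) and the scaling $\Sigma_{\D_a(\nu)}=\tfrac1a\Sigma_\nu$ (compatible with (\ref{eq0016})) one checks that $\mathbb{B}_\sigma$ commutes with dilations: $\mathbb{B}_\sigma(\D_a(\nu))=\D_a(\mathbb{B}_\sigma(\nu))$. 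Combining this with the fact that $\D_a$ commutes with the free and Boolean powers and with (\ref{booleanpower}), (\ref{freepower}), I rewrite $\mu^{\uplus t}=\D_t(\mathbb{B}_{\delta_{1/t}}(\mu))$ for $t>0$ and $\mu^{\boxplus t}=\D_t(\mathbb{B}_{\bmrho_{1/t}}(\mu))$ for $t\geq1$. Applying these twice and invoking the composition law $\mathbb{B}_{\sigma_1}\circ\mathbb{B}_{\sigma_2}=\mathbb{B}_{\sigma_2\calt\sigma_1}$ from Theorem~\ref{mainthm1} gives
\[
(\mu^{\boxplus p})^{\uplus q}=\D_{pq}\,\mathbb{B}_{\bmrho_{1/p}\calt\delta_{1/q}}(\mu),
\qquad
(\mu^{\uplus q'})^{\boxplus p'}=\D_{p'q'}\,\mathbb{B}_{\delta_{1/q'}\calt\bmrho_{1/p'}}(\mu).
\]
Here one must verify that $p\geq1$ and $q>1-\tfrac1p$ indeed force $p'\geq1$ and $q,q'>0$, so that (\ref{freepower}) and (\ref{booleanpower}) are applicable with the indicated parameters, and that $\bmrho_{1/p},\bmrho_{1/p'}\neq\delta_0$.

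Since $p'q'=pq$ is immediate from the definitions of $p'$ and $q'$, it remains only to prove $\bmrho_{1/p}\calt\delta_{1/q}=\delta_{1/q'}\calt\bmrho_{1/p'}$. I would do this by comparing $\eta$-transforms via $\eta_{\sigma\calt\tau}=\eta_\sigma\circ\eta_\tau$ and the explicit formulas (\ref{eq0011}), (\ref{eq0012}): the left-hand side yields $z/(pq+(1-p)z)$, while the right-hand side yields $z/(q'p'+q'(1-p')z)$, and these coincide once one substitutes the elementary identities $q'p'=pq$ and $q'(1-p')=1-p$, which are exactly what the formulas $p'=pq/(1-p+pq)$ and $q'=1-p+pq$ encode. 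This establishes the proposition for $\mu\in\mathcal{P}(\R_+)\setminus\{\delta_0\}$.

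For a general $\mu\in\mathcal{P}(\R)$ I would reduce to the positive case by translation and approximation. Translating a measure by $c$ transforms $\nu^{\boxplus p}$ into its translate by $pc$ and $\nu^{\uplus q}$ into its translate by $qc$ (checked on $\phi$- and $\eta$-transforms, using $\phi_{\delta_c}(z)=c$, $\eta_{\delta_c}(z)=cz$), so for compactly supported $\mu$ one chooses $c$ large enough that the translate of $\mu$ by $c$ lies in $\mathcal{P}(\R_+)\setminus\{\delta_0\}$, applies the previous case, and translates back by $-pqc=-p'q'c$; weak continuity of $\mu\mapsto\mu^{\boxplus p}$ and $\mu\mapsto\mu^{\uplus q}$ then removes the compact-support hypothesis. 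I expect the genuine work to lie not in any analytic difficulty — the core is the trivial rational-function identity above — but in the bookkeeping: tracking the parameter ranges, keeping straight the order of convolution in $\mathbb{B}_{\sigma_1}\circ\mathbb{B}_{\sigma_2}=\mathbb{B}_{\sigma_2\calt\sigma_1}$, and making the reduction from $\mathcal{P}(\R)$ to $\mathcal{P}(\R_+)$ rigorous.
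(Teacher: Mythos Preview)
Your proposal is correct and follows essentially the same route as the paper: reduce the identity to the commutation relation $\bmrho_{1/p}\calt\delta_{1/q}=\delta_{1/q'}\calt\bmrho_{1/p'}$ and invoke (\ref{eq main2}) together with (\ref{booleanpower}), (\ref{freepower}). You go somewhat further than the paper by spelling out the passage from $\mathcal{P}(\R_+)$ to $\mathcal{P}(\R)$ via translation and weak approximation; the paper's own proof leaves this step implicit, presumably because the statement is already established in full generality in \cite{BN08} and only the new subordination-based viewpoint is being advertised.
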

\begin{proof}
This is a straight consequence of the relation 
$$
\bmrho_{1/p} \calt \delta_{1/q}=\delta_{1/q'}\calt \bmrho_{1/p'} 
$$ 
and  (\ref{eq main2}).
\end{proof}

When $\mu=\nu$, the subordination map has a special meaning. 
\begin{example} \label{mu=nu}
The map $\mu\mapsto \mathbb{B}_\mu(\mu)$ is the multiplicative Boolean-to-free Bercovici-Pata map on $\mathcal{P}(\R_+)\setminus\{\delta_0\}$, since 
$\Sigma_{\mathbb{B}_\mu(\mu)}(z)= \Sigma_\mu(\eta_\mu(z))=\frac{z}{\eta_\mu(z)}$ \cite{AHc}. The measure $\mathbb{B}_\mu(\mu)$ is $\boxtimes$-infinitely divisible for any $\mu\in\mathcal{P}(\R_+)\setminus\{\delta_0\}$, but this map is not surjective onto the set of $\boxtimes$-infinitely divisible measures.
\end{example}

\begin{example}
We specialize in $\mathbb{B}_t$ by taking $\nu_t :=\bmsigma_t$. In this case $\eta_{\nu_t}=\frac{z}{1-tz}$ and so 
$$
K(z)=\frac{1}{z}\frac{d}{dt}\bigg|_0\eta_{\nu_t}(z)=z.
$$
Thus we recover the complex Burgers equation of Belinschi and Nica: 
$$
\frac{\partial H}{\partial t}=-H\frac{\partial H}{\partial z}. 
$$
\end{example}

\begin{example} 
Let $\nu_t=\delta_{e^{-t}}$ and consider $\mathbb{B}_{\delta_{e^{-t}}}(\mu)=\D_{e^{-t}}\mu^{\uplus e^t}$. Then $\eta_{\nu_t}=ze^{-t}$ and so 
$$
K(z)=\frac{1}{z}\frac{d}{dt}\bigg|_0\eta_{\nu_t}(z)=-1, 
$$ 
and so we obtain 
$$
\frac{\partial H}{\partial t}=z\frac{\partial H}{\partial z}. 
$$
\end{example}

\begin{example} 
Since $\{\bmrho_t\}_{t\geq0}$ satisfies $\bmrho_s \calt \bmrho_t = \bmrho_{st}$ for $s,t \geq 0$, we put $\nu_t=\bmrho_{e^{-t}}$. Consider the composition semigroup $\mathbb{B}_{\nu_t}(\mu)=\D_{e^{-t}}\mu^{\boxplus e^t}$. Then 
$$
K(z)=\frac{1}{z}\frac{d}{dt}\bigg|_0\frac{z}{e^t+(1-e^t)z}=z-1.
$$  
Thus the differential equation becomes
$$
\frac{\partial H}{\partial t}=(z-H)\frac{\partial H}{\partial z}. 
$$
\end{example}

\begin{example} The Boolean stable law $\mathbf{b}_\alpha^+$ satisfies $\mathbf{b}_\alpha^+\calt \mathbf{b}_\beta^+ =\mathbf{b}_{\alpha\beta}^+$ for $\alpha,\beta\in(0,1]$; see (\ref{booleeta}). So we define $\nu_t=\mathbf{b}^+_{e^{-t}}$ to obtain a composition semigroup. The generator is computed as 
$$
K(z) = -\frac{1}{z}\frac{d}{dt}\bigg|_0 (-z)^{e^{-t}} = -\log(-z). 
$$  
The differential equation for $H(t,z)$ is: 
$$
\frac{\partial H}{\partial t}=z\log\left(\frac{H}{z}\right)\frac{\partial H}{\partial z}. 
$$
\end{example}

\begin{example} The beta distribution $\Beta_{\alpha}$  satisfies $\Beta_{\alpha} \calt \Beta_{\gamma} = \Beta_{\alpha\gamma}$ for $\alpha,\gamma \in[0,1]$ and $\Beta_1=\delta_1$; see (\ref{betaeta}). So we define $\nu_t:=\Beta_{e^{-t}}$ and consider the composition semigroup $\mathbb{B}_{\nu_t}$. The generator $K$ is given by 
$$
K(z) = \frac{1}{z}\frac{d}{dt}\bigg|_0\left(1- (1-z)^{e^{-t}}\right) = \frac{(1-z)\log(1-z)}{z},  
$$
and so the differential equation for $H(t,z)$ is given by  
$$
\frac{\partial H}{\partial t}=z\left(\frac{z}{H}-1\right)\log\left(1-\frac{H}{z}\right)\frac{\partial H}{\partial z}. 
$$
\end{example}
The maps $\mathbb{B}_{\mathbf{b}_\alpha}$ and $\mathbb{B}_{\Beta_\alpha^+}$ will be investigated more in Section \ref{sec4}.

\section{The main results: Additive case}\label{sec3}
\subsection{Proof of Theorem \ref{mainthm3}}
The measure $\mathbb{A}_\nu(\mu)$ is characterized by 
\begin{equation}\label{subordination'}
\phi_{\mathbb{A}_\nu(\mu)}(z)=\phi_{\mu}(F_\nu(z)),~~z\in \Gamma_{\lambda,M}
\end{equation}
for some $\lambda, M>0.$ 
The homomorphism property follows from (\ref{subordination'}): 
\begin{equation*}
\begin{split}
\phi_{\mathbb{A}_\nu(\mu_1\boxplus\mu_2)}(z)&=\phi_{\mu_1\boxplus\mu_2}(F_\nu(z))=\phi_{\mu_1}(F_\nu(z))+\phi_{\mu_2}(F_\nu(z))\\
&=\phi_{\mathbb{A}_\nu(\mu_1)}(z)+\phi_{\mathbb{A}_\nu(\mu_2)}(z)=\phi_{\mathbb{A}_\nu(\mu_1)\boxplus \mathbb{A}_\nu(\mu_2)}(z), 
\end{split}
\end{equation*}
and so $\mathbb{A}_\nu(\mu_1\boxplus\mu_2)=\mathbb{A}_\nu(\mu_1)\boxplus \mathbb{A}_\nu(\mu_2).$ 
The compositional relation is proved as follows: 
\begin{eqnarray*}
\phi_{\mathbb{A}_{\nu_1}(\mathbb{A}_{\nu_2}(\mu))}(z)
&=&\phi_{\mathbb{A}_{\nu_2}(\mu)}(F_{\nu_1}(z))=\phi_{\mu}(F_{\nu_2}(F_{\nu_1}(z))\\
&=&\phi_{\mu}(F_{\nu_2\rhd\nu_1}(z))=\phi_{\mathbb{A}_{\nu_2\rhd\nu_1}(\mu)}(z), 
\end{eqnarray*}
and hence $\mathbb{A}_{\nu_1}\circ \mathbb{A}_{\nu_2} = \mathbb{A}_{\nu_2\rhd\nu_1}$. 

\subsection{Proof of Theorem \ref{mainthm4}} 
The measures $\nu_t$ satisfy $F_{\nu_t}\circ F_{\nu_s} =F_{\nu_{t+s}}$ and $F_{\nu_0}(z)=z$. 
By differentiating this compositional relation as regards $s$ at $0$, the following partial differential equation is obtained: 
\begin{equation}\label{partial2}
\frac{\partial F_{\nu_t}}{\partial t}(z)= J(z)\frac{\partial F_{\nu_t}}{\partial z}(z). 
\end{equation}
Let $F=F(t,z):= F_{\mathbb{A}_{\nu_t}(\mu)}(z)$, and let $F^{-1}$ denote the compositional inverse with respect to $z$. 
From (\ref{subordination'}) and (\ref{partial2}), the partial derivatives $\frac{\partial  F^{-1}}{\partial t}$ and $\frac{\partial  F^{-1}}{\partial z}$ satisfy the following: 
\begin{equation}\label{eq00001}
\frac{\frac{\partial  F^{-1}}{\partial t}(t,z)}{\frac{\partial  F^{-1}}{\partial z}(t,z)-1}=\frac{\phi_\mu'(F_{\nu_t}) \frac{\partial  F_{\nu_t}}{\partial t}(t,z)}{\phi_\mu'(F_{\nu_t}) \frac{\partial  F_{\nu_t}}{\partial z}(t,z)} = J(z). 
\end{equation}
Differentiating $F(t, F^{-1}(t,z))=z$ with respect to $t$ and $z$,  we have 
\begin{equation}\label{eq00002}
\frac{\partial  F^{-1}}{\partial t}(t,F)=-\frac{\frac{\partial  F}{\partial t}(t,z)}{\frac{\partial  F}{\partial z}(t,z)},~~~~\frac{\partial  F^{-1}}{\partial z}(t,F)=\frac{1}{\frac{\partial  F}{\partial z}(t,z)}. 
\end{equation}
Replace $z$ by $F(z)$ in Eq.\ (\ref{eq00001}), thanks to (\ref{eq00002}), we have 
\begin{equation}\label{eq00004}
\frac{\partial  F}{\partial t} = J(F) \left(\frac{\partial F}{\partial z}-1 \right). 
\end{equation}
The conclusion therefore follows. 

\subsection{Examples}

The following is the additive analog of Example \ref{mu=nu}. 
\begin{example}
The map $\mu\mapsto \mathbb{A}_\mu(\mu)$ is the additive Boolean-to-free Bercovici-Pata bijection on $\mathcal{P}(\R)$, since 
$\phi_{\mathbb{A}_\mu(\mu)}(z)= \phi_\mu(F_\mu(z))=z-F_\mu(z)$. A measure $\nu$ can  be written as  $\nu=\mathbb{A}_\mu(\mu)$ for some $\mu\in\mathcal{P}(\R)$ if and only if  $\nu$ is $\boxplus$-infinitely divisible.
\end{example}

\begin{example}  
Let $\nu_t$ be the arcsine law with mean 0 and variance $t$: 
$$
\nu_t(dx)= \frac{1}{\pi\sqrt{2t-x^2}}1_{(-\sqrt{2t}, \sqrt{2t})}(x)\,dx. 
$$
Then it holds that $\nu_{s+t}=\nu_s \rhd \nu_t$, $s,t\geq0$, $\nu_0=\delta_0$, because $F_{\nu_t}$ is given by $\sqrt{z^2-2t}$. The generator is given by $J(z)=-\frac{1}{z}$. 
So the differential equation of Theorem \ref{mainthm4} becomes
$$
\frac{\partial G}{\partial t}=\frac{1}{G-z}\frac{\partial G}{\partial z}.
$$
 \end{example}

\begin{example}  
Let $\nu_t$ be $\mathbf{c}_{at,bt}$, where $\mathbf{c}_{a,b}$ is the Cauchy distribution 
$$
\mathbf{c}_{a,b}(dx) = \frac{b}{\pi[(x-a)^2 + b^2]}1_{\R}(x)\,dx,~~a \in \R, ~~b > 0. 
$$ 
We define $\mathbf{c}_{a,0}=\delta_a$ considering the weak continuity. 
Then $\nu_{s+t}=\nu_s \rhd \nu_t$, $s,t\geq0$, $\nu_0=\delta_0$. The generator $J$ is the constant $-a+bi$, and so the differential equation is 
$$
\frac{\partial G}{\partial t}=(-a+bi)\frac{\partial G}{\partial z}.
$$
 \end{example}

\section{Examples of multiplicative free subordination}\label{sec4}
We study the subordination map $\mathbb{B}_\nu$ when $\nu=\Beta_\alpha$ or $\mathbf{b}_\alpha^+$ and find its relation with the Markov transform. Many computations in this section may be extended to symmetric probability measures by using the $S$-transform for symmetric measures, but we restrict ourselves to the positive and negative cases. 

\subsection{Transforms $\MM_\alpha^+$ and $\U_\alpha^+$}
\begin{definition}
\begin{enumerate}[\rm(1)]
\item For $\mu\in\mathcal{P}(\R_+)$ and $\alpha \in (0,1]$, let $\MM_\alpha^+: \mathcal{P}(\R_+)\to\mathcal{P}(\R_+)$ be the map defined by 
\begin{equation}\label{eq04} 
\MM_\alpha^+(\mu) =  \B_{\Beta_\alpha}(\mu)^{\boxtimes 1/\alpha} \boxtimes \mathbf{m}_\alpha^+. \end{equation}
\item For $\mu\in\mathcal{P}(\R_+)$ and $\alpha \in (0,1]$, let $\U_\alpha^+: \mathcal{P}(\R_+)\to\mathcal{P}(\R_+)$ be the map defined by 
\begin{equation}\label{eq0411}
\U_\alpha^+(\mu)=\B_{\mathbf{b}_\alpha^+}(\mu)^{\boxtimes 1/\alpha}.  
\end{equation} 
\end{enumerate}
\end{definition}

\begin{theorem}\label{theorem21}
\begin{enumerate}[\rm(1)]
\item 
The probability measure $\MM_\alpha^+(\mu)$ is characterized by  
\begin{equation}
F_{\MM_\alpha^+(\mu)}(z)= -(-F_\mu(-(-z)^\alpha))^{1/\alpha},\qquad z<0, \quad \mu\in\mathcal{P}(\R_+).  \label{eq12}
\end{equation}
The map $\MM_\alpha^+$ satisfies a semigroup and a homomorphism properties: 
\begin{align}
&\MM_\alpha^+ \circ \MM_\alpha^+ =\MM_{\alpha\beta}^+;  \label{eq041} \\
&\MM_\alpha^+(\mu\rhd\nu)= \MM_\alpha^+(\mu)\rhd \MM_\alpha^+(\nu). \label{eq0410} 
\end{align}

\item The probability measure $\U_\alpha^+(\mu)$ is characterized by  
\begin{equation}
\eta_{\U_\alpha^+(\mu)}(z) = -\left(-\eta_{\mu}(-(-z)^\alpha)\right)^{1/\alpha},\qquad z<0,\quad\mu\in\mathcal{P}(\R_+).  \label{eq0041}  
\end{equation}
Moreover,  we have a semigroup and  homomorphism properties: 
\begin{align}
&\U_\alpha^+ \circ \U_\beta^+ =\U_{\alpha\beta}^+; \label{U1} \\
&\U_\alpha^+(\mu\calt\nu)= \U_\alpha^+(\mu)\calt \U_\alpha^+(\nu); \label{U2}\\
&\U_\alpha^+(\mu \boxtimes \nu)=\U_\alpha^+(\mu)\boxtimes \U_\alpha^+(\nu). \label{U3}  
\end{align}

\end{enumerate}
\end{theorem}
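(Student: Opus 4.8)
The plan is to derive everything from the subordination identity (\ref{subordination}), the explicit $\eta$-transforms (\ref{booleeta}), (\ref{betaeta}), and the $\Sigma$-transforms (\ref{sigmaboole}), (\ref{sigmamonotone}), using only the formula $\Sigma_{\mu^{\boxtimes t}}(z) = \Sigma_\mu(z)^t$ and $\Sigma_{\mu\boxtimes\nu} = \Sigma_\mu \Sigma_\nu$. I treat part (2) first since it is cleaner. Starting from $\eta_{\mathbf{b}_\alpha^+}(z) = -(-z)^\alpha$, the definition $\B_{\mathbf{b}_\alpha^+}(\mu)$ via (\ref{subordination}) gives $\Sigma_{\B_{\mathbf{b}_\alpha^+}(\mu)}(z) = \Sigma_\mu(-(-z)^\alpha)$; raising to the power $1/\alpha$ and translating back from $\Sigma$ to $\eta$ should yield (\ref{eq0041}). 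Concretely, I would write $\eta_{\U_\alpha^+(\mu)}^{-1}(z) = z\,\Sigma_\mu(-(-z)^{\alpha})^{1/\alpha}$, substitute $w = \eta_{\U_\alpha^+(\mu)}(z)$, and massage the resulting relation into $\eta_{\U_\alpha^+(\mu)}(z) = -(-\eta_\mu(-(-z)^\alpha))^{1/\alpha}$ by checking that both sides have the same compositional inverse. The key bookkeeping is that for $z<0$ all the quantities $-z$, $(-z)^\alpha$, $-\eta_\mu(\text{negative})$ are positive, so the principal-branch powers behave as on $(0,\infty)$ and no branch issues arise.

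For the semigroup property (\ref{U1}) I would simply iterate (\ref{eq0041}): $\eta_{\U_\alpha^+(\U_\beta^+(\mu))}(z) = -(-\eta_{\U_\beta^+(\mu)}(-(-z)^\alpha))^{1/\alpha}$, then expand the inner $\eta_{\U_\beta^+(\mu)}$ by (\ref{eq0041}) again; the exponents compose as $(\,\cdot\,)^{1/\alpha}\circ(\,\cdot\,)^{1/\beta} = (\,\cdot\,)^{1/(\alpha\beta)}$ and the base powers compose as $(-z)^\beta$ inside $(-z)^\alpha$ giving $(-z)^{\alpha\beta}$, matching (\ref{eq0041}) with $\alpha$ replaced by $\alpha\beta$. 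For the homomorphism properties (\ref{U2}) and (\ref{U3}): (\ref{U3}) follows because $\U_\alpha^+(\mu) = \B_{\mathbf{b}_\alpha^+}(\mu)^{\boxtimes 1/\alpha}$ is a composition of $\B_{\mathbf{b}_\alpha^+}$ — a $\boxtimes$-homomorphism by Theorem \ref{mainthm1} — with the $\boxtimes$-power map, which is also a $\boxtimes$-homomorphism; so their composition is one. For (\ref{U2}), I would instead work directly with (\ref{eq0041}): if $\eta_{\mu\calt\nu} = \eta_\mu\circ\eta_\nu$ by (\ref{calt}), then plugging the composition into the right-hand side of (\ref{eq0041}) and inserting $\mathrm{id} = (-(-(\,\cdot\,))^{1/\alpha})\circ(-(-(\,\cdot\,))^\alpha)$ between the two factors shows $\eta_{\U_\alpha^+(\mu\calt\nu)} = \eta_{\U_\alpha^+(\mu)}\circ\eta_{\U_\alpha^+(\nu)} = \eta_{\U_\alpha^+(\mu)\calt\U_\alpha^+(\nu)}$.

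Part (1) runs the same way but through the reciprocal Cauchy transform, using $\B_{\Beta_\alpha}$ and $\mathbf{m}_\alpha^+$. From (\ref{betaeta}), $\eta_{\Beta_\alpha}(z) = 1-(1-z)^\alpha$, so via (\ref{etaF}) the map $z\mapsto 1/z$ conjugates $\eta_{\Beta_\alpha}$ into the map $z\mapsto -(-z)^\alpha$ on the negative axis — the same kernel as in the Boolean case. Combining (\ref{subordination}), (\ref{sigmamonotone}) for $\mathbf{m}_\alpha^+$, and the $\boxtimes$-power, I would compute $\Sigma_{\B_{\Beta_\alpha}(\mu)^{\boxtimes 1/\alpha}\boxtimes\mathbf{m}_\alpha^+}(z)$, convert to $\eta$, and then rewrite in terms of $F$ using $\eta_\nu(z) = 1 - zF_\nu(1/z)$ to land on (\ref{eq12}); the extra $\boxtimes\mathbf{m}_\alpha^+$ factor is exactly what turns the "$\Sigma_\mu$ composed with a power" into "$F_\mu$ composed with a power" with the correct $1/\alpha$ root outside. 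Then (\ref{eq041}) is the $F$-analog of the exponent-composition computation above, and (\ref{eq0410}) follows from (\ref{rhd}): $F_{\mu\rhd\nu} = F_\mu\circ F_\nu$ plugged into (\ref{eq12}), inserting $\mathrm{id} = (-(-(\,\cdot\,))^{1/\alpha})\circ(-(-(\,\cdot\,))^\alpha)$ between the factors. The main obstacle I anticipate is purely one of careful sign- and branch-tracking: verifying that $F_\mu$ maps the relevant negative reals (or a truncated cone) to a region where $-(-w)^{1/\alpha}$ is well-defined and single-valued, so that all the compositions in (\ref{eq12}) and (\ref{eq0041}) make sense and the inverse-function manipulations are legitimate; once the domains are pinned down, the algebra is routine.
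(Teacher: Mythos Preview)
Your proposal is correct and follows essentially the same route as the paper: compute $\Sigma_{\U_\alpha^+(\mu)}$ via (\ref{subordination}) and the $\boxtimes$-power, invert to get (\ref{eq0041}), then read off (\ref{U1})--(\ref{U3}) exactly as you describe; for part (1) the paper does the analogous $\Sigma$-computation but runs it backward (starting from the claimed $F$-formula and checking its $\Sigma$-transform matches the definition). One practical difference worth adopting: to avoid the sign-tracking you flag as the main obstacle, the paper first conjugates by $\D_{-1}$ and proves the cleaner $\R_-$ version $F_{\MM_\alpha^-(\mu)}(z)=(F_\mu(z^\alpha))^{1/\alpha}$ for $z>0$, where all arguments are positive and the principal powers are unambiguous; (\ref{eq12}) then follows immediately.
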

\begin{remark}\label{rem7}
\begin{enumerate}[\rm(1)]  
\item The fact that $\MM_\alpha^+$ is a homomorphism regarding $\rhd$ may be seen as the monotonic counterpart of \cite[Theorem 4.14]{AHd}, where the maps 
\begin{align*}
&\mathbf{B}_{\alpha}^+(\mu) = \mu^{1/\alpha}\circledast\mathbf{b}_{\alpha}^+ =  \mu^{\boxtimes 1/\alpha}\boxtimes \mathbf{b}_{\alpha}^+,  \\ 
&\mathbf{N}_{\alpha}^+(\mu) = \mu^{1/\alpha}\circledast \mathbf{n}_{\alpha}^+,  \\
&\mathbf{F}_{\alpha}^+(\mu) = \mu^{\boxtimes 1/\alpha}\boxtimes \mathbf{f}_{\alpha}^+
\end{align*}
are shown to be homomorphisms with respect to $\uplus, \ast, \boxplus$, respectively (the measure $\mathbf{n}_\alpha^+$ is a classical positive $\alpha$-stable law). They also become compositional semigroups: $\mathbf{B}_{\alpha}^+ \circ \mathbf{B}_{\beta}^+=\mathbf{B}_{\alpha\beta}^+$, $\mathbf{N}_{\alpha}^+ \circ \mathbf{N}_{\beta}^+=\mathbf{N}_{\alpha\beta}^+$, $\mathbf{F}_{\alpha}^+ \circ \mathbf{F}_{\beta}^+=\mathbf{F}_{\alpha\beta}^+$. 

\item The formula (\ref{eq12}) becomes simpler on $\R_-$. We introduce the conjugated map $\MM_\alpha^-:= \D_{-1}\circ \MM_\alpha^+ \circ \D_{-1}$ on $\mathcal{P}(\R_-)$. Then 
the formula (\ref{eq04}) changes to  
\begin{align}
&\MM_\alpha^-(\mu) = \B_{\Beta_\alpha}(\D_{-1}(\mu))^{\boxtimes 1/\alpha}\boxtimes \mathbf{m}_\alpha^- . \label{minusT}
\end{align}
and (\ref{eq12}) can be written as  
\begin{equation}\label{eq602}
F_{\MM_\alpha^-(\mu)}(z)= (F_\mu(z^\alpha))^{1/\alpha},\qquad z>0. 
\end{equation}
We can also define $\U^-_\alpha$ on $\mathcal{P}(\R_-)$, but this does not simplify (\ref{eq0041}) so much. 
\end{enumerate}
\end{remark}

\begin{proof}
 We are going to show (\ref{eq602}) instead of (\ref{eq12}). Let $F(z):=(F_\mu(z^\alpha))^{1/\alpha}$ and $\eta(z):= 1-zF(1/z)$. Using the relation (\ref{etaF}) twice, we have
$$
\eta(z) = 1-z\left(F_\mu\left(\frac{1}{z^\alpha}\right)\right)^{1/\alpha} = 1-(1-\eta_\mu(z^\alpha))^{1/\alpha},~z>0, 
$$
so that $\eta$ is strictly increasing on $(-\infty,0)$ and it has the compositional inverse
$$
\eta^{-1}(w) = [\eta_\mu^{-1}(1-(1-w)^\alpha)]^{1/\alpha}=\left(\eta_\mu^{-1}(\eta_{\Beta_\alpha}(w)) \right)^{1/\alpha}, ~w\in(-\varepsilon,0) 
$$
for some $\varepsilon>0$. Hence 
\[
\begin{split}
\Sigma(w)
&:=\frac{\eta^{-1}(w)}{w} =-\left(\frac{\eta_\mu^{-1}(\eta_{\Beta_\alpha}(w))}{(-w)^\alpha} \right)^{1/\alpha}\\
&= -\left(\frac{(-\eta_{\Beta_\alpha}(w))(- \Sigma_\mu\circ\eta_{\Beta_\alpha}(w))}{(-w)^\alpha} \right)^{1/\alpha}\\
&= \frac{((1-w)^\alpha-1)^{1/\alpha}}{-w} (\Sigma_{\mathbb{B}_{\Beta_\alpha}(\D_{-1}(\mu)}(w))^{1/\alpha}\\
&= \Sigma_{\mathbf{m}_\alpha^-}(w) \Sigma_{\mathbb{B}_{\Beta_\alpha}(\D_{-1}(\mu))^{\boxtimes 1/\alpha}}(w), \\
&= \Sigma_{\MM_\alpha^-(\mu)},\qquad w\in(-\varepsilon',0)
\end{split}
\]
for some $\varepsilon'>0$, where the formula (\ref{sigmamonotone}) was used on the fourth line. 
This computation shows (\ref{eq602}). 
(\ref{eq041}) and (\ref{eq0410}) are simple applications of (\ref{eq12}).  

The proof of (\ref{eq0041}) is following. We obtain 
\[
\begin{split}
 \frac{1}{w}\eta_{\U_\alpha^+(\mu)}^{-1}(w)
 &=\Sigma_{\U_\alpha^+(\mu)}(w)  
 =\Sigma_{\B_{\mathbf{b}_\alpha^+}(\mu)^{\boxtimes 1/\alpha}}(w) 
= \Sigma_{\B_{\mathbf{b}_\alpha^+}(\mu)}(w)^{1/\alpha} \\
&= \left(\Sigma_{\mu}(-(-w)^\alpha)\right)^{1/\alpha} = \frac{1}{-w}(-\eta_\mu^{-1}(-(-w)^{\alpha}))^{1/\alpha},~~w\in(-\delta,0) 
\end{split}
\]
for some $\delta>0$, where (\ref{subordination}) was used on the fourth equality. After some more computation we have (\ref{eq0041}). 
(\ref{U1}) and (\ref{U2}) are easy applications of (\ref{eq0041}). (\ref{U3}) follows from the relation  $\Sigma_{\U_\alpha^+(\mu)}(w)=(\Sigma_\mu(-(-w)^\alpha))^{1/\alpha}.$
\end{proof}

\begin{example}\label{ex43} What is shown below is several computations involving $\MM_\alpha^+$, $\U_\alpha^+$, $\mathbb{B}_{\Beta_\alpha}$ and  $\mathbb{B}_{\mathbf{b}_\alpha^+}$ $(0<\alpha\leq1)$.  

\begin{enumerate}[\rm(1)]
\item $\MM_\alpha^+(\mathbf{m}_\beta^+)=\mathbf{m}_{\alpha\beta}^+$. In particular, $\MM_\alpha^+(\delta_1)=\mathbf{m}_\alpha^+.$
\item\label{betaboole} 
$\MM_\alpha^+(\Beta_\alpha) =\mathbf{b}_\alpha^+$.

\item\label{bernoullibeta} $\MM_\alpha^+(\bmrho_\alpha)=\Beta_\alpha.$

\item\label{mp} $\B_{\Beta_\alpha}(\MP^{\boxtimes 1/\alpha})=\MP$, since 
$$
\Sigma_{\B_{\Beta_\alpha}(\MP^{\boxtimes 1/\alpha})}(z)= \Sigma_{\MP^{\boxtimes 1/\alpha}}( \eta_{\Beta_\alpha}(z)) = 
\Sigma_{\MP}( \eta_{\Beta_\alpha}(z))^{1/\alpha} = 1-z = \Sigma_{\MP}(z). 
$$

\item $\MM_\alpha^+(\MP) =  \mathbf{m}_\alpha^+\boxtimes \MP$ because of (\ref{mp}) above and (\ref{eq04}). 

\item In \cite{AHa}, a family of  measures is introduced. Part of that family is $\mu_{\alpha,p}$ which is characterized by 
\begin{equation} \label{eq46}
G_{\mu_{\alpha, p}}(z)=
-\left(\frac{\left(1+\left(-\frac{1}{z}\right)^{\alpha}\right)^{p}-1}{p}\right)^{1/\alpha},  ~~z\in\comp^+,  
\end{equation}
for $\alpha, p \in (0,1]$.\footnote{The parametrization is different from that in the original paper \cite{AHa}, and the dilation is omitted here. } The powers 
$z\mapsto z^{\alpha}$ and $z\mapsto z^{1/\alpha}$ are the principal value. 
We have the identity 
\begin{equation}
\mu_{\alpha,p}=\MM_\alpha^+(\Beta_{1-p,1+p}),~~~\alpha,p \in (0,1],  
\end{equation}
which can be checked by using the fact that the beta distribution $\Beta_{1-p,1+p}$ coincides with $\mu_{1,p}$ \cite{AHa}.

\item $\B_{\mathbf{b}_\alpha^+}\left(\mathbf{b}_{\frac{\alpha\beta}{1-\beta+\alpha\beta}}^+\right)=\mathbf{b}_\beta^+$, $\alpha,\beta\in(0,1]$.

\item $\MM_\alpha^+(\mu)=\mathbf{m}_\alpha^+ \calt \U_\alpha^+(\mu)$ for $\mu\in\mathcal{P}(\R_+)$.  This is proved as follows. First we can show that 
\begin{equation}\label{eq0047}
\mathbf{b}_\alpha^+ = \Beta_\alpha \calt \mathbf{m}_\alpha^+, ~~\alpha \in (0,1]. 
\end{equation}
 by computing $\eta$-transforms. Starting from the relations (\ref{eq04}) and (\ref{subordination0}), we have  
$$
\MM_\alpha^+(\mu)=  \mathbf{m}_\alpha^+ \boxtimes \B_{\Beta_\alpha}(\mu^{\boxtimes 1/\alpha}) = \mathbf{m}_\alpha^+ \calt \mathbb{B}_{\mathbf{m}_\alpha^+}(\B_{\Beta_\alpha}(\mu^{\boxtimes 1/\alpha})). 
$$
Thanks to Theorem \ref{mainthm1} and (\ref{eq0047}), we  get
$\mathbb{B}_{\mathbf{m}_\alpha^+}\circ \B_{\Beta_\alpha} = \mathbb{B}_{\Beta_\alpha \calt\mathbf{m}_\alpha^+ } = \mathbb{B}_{\mathbf{b}_\alpha^+}$.

\item $ \mu^{1/\alpha}\circledast\mathbf{b}_\alpha^+ = \mu^{\boxtimes 1/\alpha} \boxtimes\mathbf{b}_\alpha^+=\mu \calt \mathbf{b}_\alpha^+ = \Beta_\alpha \calt \MM_\alpha^+(\mu)$ for $\mu\in\mathcal{P}(\R_+)$. The first and second equalities were proved in \cite{AHd}. The last one is proved as follows: 
\[
\begin{split}
\eta_{\Beta_\alpha \calt \MM_\alpha^+(\mu)}(z) 
&=  1-(1- \eta_{\MM_\alpha^+(\mu)}(z))^\alpha =  1-\left(z F_{\MM_\alpha^+(\mu)}(1/z)\right)^\alpha\\
&= 1+(-z)^\alpha F_\mu(-(-z)^{-\alpha}) = \eta_\mu(-(-z)^\alpha),~~z<0. 
\end{split}
\]
The last expression is equal to $\eta_{\mathbf{b}_\alpha^+ \circledast \mu^{1/\alpha}}(z)$ as shown in \cite{AHd}. 
\end{enumerate}

\end{example}

\subsection{Markov transform, mixture of Boolean stable laws and $\MM_\alpha^+$}
\begin{definition}
Let $\M(\nu)$ be the Markov (or Markov-Krein) transform of $\nu$ defined by 
$$
G_{\M(\nu)}(z)=\exp\left(\int_{\R}\log\left(\frac{1}{z-x}\right)\,\nu(dx) \right); 
$$
see \cite{K98}. 
In general $\nu$ can be taken to be a Rayleigh measure, but now we consider a probability measure $\nu$ satisfying $\int_{\R}\log(2+|x|)\,\nu(dx)<\infty$. 
It is characterized by the differential equation 
\begin{equation}\label{eq Markov}
\frac{d}{dz}G_{\M(\nu)}(z)=-G_\nu(z)G_{\M(\nu)}(z).
\end{equation}
\end{definition}
We compute the Markov transforms of classical scale mixtures of Boolean stable laws $\mu^{1/\alpha}\circledast\mathbf{b}_\alpha^+ $, $\mu\in\mathcal{P}(\R_+)$.  

\begin{proposition}\label{prop0015} Let $\alpha\in(0,1]$ as usual. Then 
\begin{equation}\label{eq05}
\M( \nu^{1/\alpha}\circledast\mathbf{b}_\alpha^+) = \MM_\alpha^+(\M(\nu)),~~~\nu \in \mathcal{P}(\R_+). 
\end{equation}
\end{proposition}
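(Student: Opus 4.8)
The plan is to work entirely at the level of the defining transforms and to reduce \eqref{eq05} to the characterization \eqref{eq12} of $\MM_\alpha^+$, using the differential equation \eqref{eq Markov} for the Markov transform. Write $\rho:=\M(\nu)$ and recall that $\rho$ is characterized by $G_\rho'(z)=-G_\nu(z)G_\rho(z)$. The left-hand side of \eqref{eq05} is $\M(\tau)$ with $\tau:=\nu^{1/\alpha}\circledast\mathbf{b}_\alpha^+=\nu^{\boxtimes 1/\alpha}\boxtimes\mathbf{b}_\alpha^+$, so it satisfies $G_{\M(\tau)}'(z)=-G_\tau(z)G_{\M(\tau)}(z)$. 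Thus it suffices to show that the function $z\mapsto G_{\MM_\alpha^+(\rho)}(z)$ satisfies the \emph{same} ODE, namely
\[
\frac{d}{dz}G_{\MM_\alpha^+(\rho)}(z)=-G_\tau(z)\,G_{\MM_\alpha^+(\rho)}(z),
\]
together with the correct normalization at $\infty$ (both sides behave like $1/z$), because the solution of \eqref{eq Markov} with a prescribed asymptotic is unique. Here $G_\tau$ is known explicitly: by \cite{AHd} (cf.\ Example \ref{ex43}(9)) one has $\eta_\tau(z)=\eta_\nu(-(-z)^\alpha)$, equivalently $F_\tau(z)=-(-F_{??})\cdots$; more directly it is cleanest to work on $\R_-$ with the conjugated map and use \eqref{eq602}.

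Concretely, I would pass to the negative half-line, replacing $\rho$ by $\D_{-1}(\rho)$ and using $\MM_\alpha^-$; then \eqref{eq602} gives $F_{\MM_\alpha^-(\rho)}(z)=(F_\rho(z^\alpha))^{1/\alpha}$ for $z>0$, hence $G_{\MM_\alpha^-(\rho)}(z)=(G_\rho(z^\alpha))^{1/\alpha}$. Differentiating this identity and substituting the ODE $G_\rho'=-G_\nu G_\rho$ yields
\[
\frac{d}{dz}\left(G_\rho(z^\alpha)\right)^{1/\alpha}
=\frac{1}{\alpha}\left(G_\rho(z^\alpha)\right)^{1/\alpha-1}\cdot G_\rho'(z^\alpha)\cdot \alpha z^{\alpha-1}
=-z^{\alpha-1}G_\nu(z^\alpha)\left(G_\rho(z^\alpha)\right)^{1/\alpha}.
\]
So $G_{\MM_\alpha^-(\rho)}$ satisfies \eqref{eq Markov} with the measure whose Cauchy transform is $z\mapsto z^{\alpha-1}G_\nu(z^\alpha)$. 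It then remains to identify this function with $G$ of the appropriate scale mixture of Boolean stable laws (the negative-side version of $\tau$): this is exactly the Cauchy-transform identity $G_{\nu^{\boxtimes 1/\alpha}\boxtimes \mathbf{b}_\alpha^+}$ translated to $\R_-$, which follows from the relation $\eta_\tau(z)=\eta_\nu(-(-z)^\alpha)$ established in \cite{AHd}. A short computation converting $\eta_\tau$ into $G_\tau$ (via $\eta_\mu(z)=1-zF_\mu(1/z)$ and $F=1/G$) shows $G_\tau(z)=(-z)^{-(1-\alpha)}\,G_\nu\bigl(-(-z)^\alpha\bigr)$ on $\R_-$, which matches $z^{\alpha-1}G_\nu(z^\alpha)$ after the reflection $\D_{-1}$. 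Finally undo the conjugation: $\M$ commutes with $\D_{-1}$ up to reflection, and $\MM_\alpha^+=\D_{-1}\circ\MM_\alpha^-\circ\D_{-1}$, so the $\R_-$ identity transports back to \eqref{eq05}.

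The step I expect to be the main obstacle is the bookkeeping in the identification of $G_\tau$: one must carefully track the branches of $z\mapsto z^\alpha$ and $z\mapsto z^{1/\alpha}$ (principal values, as fixed in the Notations), verify that all the transforms involved are genuinely defined and analytic on the relevant half-line or truncated cone, and confirm that the asymptotic normalization $G\sim 1/z$ at $\infty$ is preserved under each operation so that the uniqueness of solutions to \eqref{eq Markov} actually applies. The analytic input — that $G_\rho$ extends to $\comp\setminus\R_-$ with the stated ODE, that $F_\rho(z^\alpha)$ lands where $(\cdot)^{1/\alpha}$ is the principal branch, and that $\nu^{1/\alpha}\circledast\mathbf{b}_\alpha^+$ is supported on $\R_+$ so that $\M$ of it makes sense — is routine but is where all the care is needed; the differential-equation argument itself is then a one-line chain-rule computation.
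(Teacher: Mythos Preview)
Your proposal is correct and follows essentially the same route as the paper: pass to $\R_-$ via $\MM_\alpha^-$ and the formula $G_{\MM_\alpha^-(\mu)}(z)=(G_\mu(z^\alpha))^{1/\alpha}$, take the logarithmic derivative, feed in the Markov ODE $G_\mu'/G_\mu=-G_\nu$, and identify the resulting $z^{\alpha-1}G_{\D_{-1}(\nu)}(-z^\alpha)$ with $-G_{(\D_{-1}(\nu))^{1/\alpha}\circledast\mathbf{b}_\alpha^-}(z)$ using the Cauchy-transform identity from \cite{AHd}. The paper simply starts on $\R_-$ and proves \eqref{eq06} directly rather than conjugating back and forth, but the computation is identical to yours.
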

\begin{remark}\begin{enumerate}[\rm(1)]
\item In terms of $\R_-$, we have   
\begin{equation}\label{eq06}
\M((\D_{-1}(\nu))^{1/\alpha}\circledast \mathbf{b}_\alpha^-) = \MM_\alpha^-(\M(\nu)),~~~\nu \in \mathcal{P}(\R_-). 
\end{equation}
\item In terms of the map $\mathbf{B}_\alpha^+$ (see Remark \ref{rem7}), we may write (\ref{eq05}) as
\begin{equation}
\M\circ \mathbf{B}_\alpha^+= \MM_\alpha^+\circ \M, 
\end{equation}
i.e.\ the Markov transform intertwines the compositional semigroups $(\mathbf{B}_\alpha^+)_{\alpha\in(0,1]}$ and $(\MM_\alpha^+)_{\alpha\in(0,1]}$. 
\end{enumerate}
\end{remark}
\begin{proof} We consider the formula (\ref{eq06}). Let $\mu=\M(\nu)$ for $\nu\in \mathcal{P}(\R_-)$. It follows from elementary calculus that 
\[
\begin{split}
\frac{d}{dz} \log G_{\MM_\alpha^-(\mu)}(z) 
&=\frac{1}{\alpha}\frac{d}{dz} \log \left(G_\mu(z^\alpha)\right)=z^{\alpha-1}\frac{G_{\mu}'(z^\alpha)}{G_{\mu}(z^\alpha)} \\
&= -z^{\alpha-1}G_{\nu}(z^\alpha)=z^{\alpha-1}G_{\D_{-1}(\nu)}(-z^\alpha)\\
&=-G_{(\D_{-1}(\nu))^{1/\alpha}\circledast \mathbf{b}_\alpha^-}(z),~~z>0.
\end{split}
\]
The last line is a consequence of the formula (4.4) in \cite{AHd}. This shows (\ref{eq06}). 
\end{proof}

\begin{example}
\begin{enumerate}[\rm(1)]
\item\label{boolemono} $\M(\mathbf{b}_\alpha^+)=\MM_\alpha^+(\delta_1)=\mathbf{m}_\alpha^+$ since $\M(\delta_1)=\delta_1.$
\item $\M((\bmrho_\alpha)^{1/\alpha}\circledast\mathbf{b}_\alpha^+ ) = \mathbf{b}_\alpha^+$, since $\MM_\alpha^+(\M(\bmrho_\alpha)) =\MM_\alpha^+(\Beta_\alpha)=\mathbf{b}_\alpha^+$ from Example \ref{ex43}(\ref{betaboole}), (\ref{bernoullibeta}).

\item $\M((\mathbf{f}_\alpha^+)^{\uplus\alpha})=\mathbf{f}_\alpha^+$. This is not directly related to Proposition \ref{prop0015}, but seems interesting to compare with $\M(\mathbf{b}_\alpha^+)=\mathbf{m}_\alpha^+$. The proof is as follows. Note that (\ref{eq Markov}) is equivalent to 
$(\log F_{\M(\nu)})'=1/F_\nu$. Recalling the relation 
\begin{equation}\label{eq free stable}
F_{\mathbf{f}_\alpha^+}^{-1}(z)= z+(-z)^{1-\alpha}, \qquad z \in \Gamma_{\lambda, M}
\end{equation}
for some $\lambda,M>0$, we get 
\begin{equation}\label{eq75}
(\log F_{\mathbf{f}_\alpha^+})' = \frac{1}{F_{\mathbf{f}_\alpha^+}+(1-\alpha)(-F_{\mathbf{f}_\alpha^+})^{1-\alpha}}. 
\end{equation}
From (\ref{eq free stable}), we have $(-F_{\mathbf{f}_\alpha^+})^{1-\alpha}=z-F_{\mathbf{f}_\alpha^+}$. Combining this with (\ref{eq75}), we obtain 
$(\log F_{\mathbf{f}_\alpha^+})'= 1/((1-\alpha)z+\alpha F_{\mathbf{f}_\alpha^+})= 1/ F_{(\mathbf{f}_\alpha^+)^{\uplus \alpha}}$, the conclusion. 

\item  We can show that the inverse Markov transform $\nu_p:=\M^{-1}(\Beta_{1-p,1+p})$ has the Cauchy transform
$$
G_{\nu_p}(z)= \frac{p}{z}\cdot\frac{(z-1)^{p-1}}{z^p-(z-1)^p} = p\left(\frac{1}{z} +\frac{(z-1)^{p-1} -z^{p-1}}{z^p - (z-1)^p}\right),  
$$
and so 
$$
\nu_p=p\delta_0+ \frac{p\sin\pi p}{\pi}\frac{x^{p-1}(1-x)^{p-1}}{x^{2p}-2x^p(1-x)^p \cos \pi p + (1-x)^{2p}}1_{(0,1)}(x)\,dx. 
$$
Using this measure, we have $\mu_{\alpha, p}=\M((\nu_p)^{1/\alpha}\circledast\mathbf{b}_\alpha^+)$ for $0<\alpha,p \leq 1$. 
Notice the relation $\nu_p=\bmrho_{1-p}\circledast \tau_p$ exists, where
$$
\tau_p(dx):= \frac{p\sin\pi p}{(1-p)\pi}\frac{x^{p-1}(1-x)^{p-1}}{x^{2p}-2x^p(1-x)^p \cos \pi p + (1-x)^{2p}}1_{(0,1)}(x)\,dx
$$
is the law of a random variable $\mathbb{G}_p$ studied in \cite{BFRY06}. 
\end{enumerate}
\end{example}

\section{Additive subordination by Cauchy distribution}\label{sec5}

We are going to study $\mathbb{A}_\nu$ in details  when $\nu$ is the Cauchy distribution $\mathbf{c}_{a,b}$. 
The corresponding map $\mathbb{A}_{\mathbf{c}_{a,b}}$ gives us freely infinitely divisible distributions with rational function densities.  The map $\mathbb{A}_{\mathbf{c}_{a,b}}$ is denoted by $\mathbb{A}_{a,b}$ below.

\begin{proposition}
For $a \in \R$, $b \geq 0$ and $\mu\in\mathcal{P}(\R)$,  $\mathbb{A}_{a,b}(\mu)$ is characterized by 
\begin{equation}\label{eq0121}
F_{\mathbb{A}_{a,b}(\mu)}(z) = F_\mu(z-a +ib) +a -ib,~~z\in\comp^+. 
\end{equation}
\end{proposition}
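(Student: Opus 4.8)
The plan is to verify the characterizing equation (\ref{subordination'}) for the additive subordination, namely that
$$
\phi_{\mathbb{A}_{a,b}(\mu)}(z)=\phi_\mu\big(F_{\mathbf{c}_{a,b}}(z)\big)
$$
in a suitable truncated cone $\Gamma_{\lambda,M}$, where the right-hand side should produce exactly the $F$-transform described in (\ref{eq0121}). First I would recall that the Cauchy distribution $\mathbf{c}_{a,b}$ has the very simple reciprocal Cauchy transform $F_{\mathbf{c}_{a,b}}(z)=z-a+ib$, since $G_{\mathbf{c}_{a,b}}(z)=1/(z-a+ib)$. This is a genuine affine (in particular univalent) self-map of $\comp^+$ for $b>0$, and the degenerate case $b=0$ gives $F_{\delta_a}(z)=z-a$, also affine, so the same computation will cover $b\ge 0$ uniformly.

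Next I would plug this into the subordination identity. Using $\phi_\mu(w)=F_\mu^{-1}(w)-w$ and the definition (\ref{eq44}), we get
$$
F_{\mathbb{A}_{a,b}(\mu)}^{-1}(z)-z=\phi_{\mathbb{A}_{a,b}(\mu)}(z)=\phi_\mu(z-a+ib)=F_\mu^{-1}(z-a+ib)-(z-a+ib),
$$
hence $F_{\mathbb{A}_{a,b}(\mu)}^{-1}(z)=F_\mu^{-1}(z-a+ib)+a-ib$ on the relevant cone. Composing with $F_{\mathbb{A}_{a,b}(\mu)}$ on the appropriate side and using that the affine shift $z\mapsto z-a+ib$ is invertible with inverse $z\mapsto z+a-ib$, this inverts to $F_{\mathbb{A}_{a,b}(\mu)}(z)=F_\mu(z-a+ib)+a-ib$, which is (\ref{eq0121}). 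One should check that $z\mapsto z-a+ib$ maps $\comp^+$ into $\comp^+$ (clear since $\mathrm{Im}(z-a+ib)=\mathrm{Im}\,z+b\ge \mathrm{Im}\,z>0$), so that $F_\mu(z-a+ib)$ is well-defined on all of $\comp^+$, and that the composed function is indeed a reciprocal Cauchy transform of a probability measure — this follows because the right-hand side of (\ref{eq0121}) is exactly $F$ of a monotone convolution $\mathbf{c}_{a,b}\rhd\mathbb{A}_{a,b}(\mu)$-type expression, but more directly one can observe that $\mathbb{A}_{a,b}(\mu)$ is already known to exist as a probability measure for all $\mu,\nu\in\mathcal{P}(\R)$ (as recalled after (\ref{subordination2})), so it suffices to identify its $F$-transform.

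The main (and essentially only) obstacle is bookkeeping about domains: the Voiculescu transform identity holds only on a truncated cone $\Gamma_{\lambda,M}$, so the manipulation of compositional inverses above must be carried out there, and then one argues that the resulting identity between two functions analytic on $\comp^+$ extends from the cone to all of $\comp^+$ by analytic continuation (both sides are analytic on $\comp^+$ and agree on an open set). Since $F_{\mathbf{c}_{a,b}}$ is affine, there are no subtleties about $F_\mu(z-a+ib)$ leaving the half-plane, so the argument is clean; the degenerate case $b=0$ requires only noting $\mathbf{c}_{a,0}=\delta_a$ and that everything is continuous, or simply repeating the computation verbatim with $b=0$.
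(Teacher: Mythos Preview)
Your proposal is correct and follows essentially the same route as the paper: use $F_{\mathbf{c}_{a,b}}(z)=z-a+ib$ in the subordination identity $\phi_{\mathbb{A}_{a,b}(\mu)}(z)=\phi_\mu(F_{\mathbf{c}_{a,b}}(z))$ to obtain $F_{\mathbb{A}_{a,b}(\mu)}^{-1}(z)=F_\mu^{-1}(z-a+ib)+a-ib$ on a truncated cone, invert, and extend to $\comp^+$ by analyticity. The paper's proof is simply a compressed version of exactly this computation.
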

\begin{proof}
The Eq.\ (\ref{subordination'}) is equivalent to $F_{\mathbb{A}_{a,b}(\mu)}^{-1}(z)-a+ib=F^{-1}_\mu(z-a+ib)$ in a domain $\Gamma_{\lambda,M}$, 
and so the conclusion follows in a domain of $\comp^+$, which in fact is valid in $\comp^+$ by analyticity. 
\end{proof}
We denote by $m_1(\mu)$, $m_2(\mu)$ and $\sigma^2(\mu)$ the first moment, second moment and variance of a probability measure $\mu$, respectively. The notation $f \sim g,~|x| \to \infty$ means that $\frac{f(x)}{g(x)} \to 1$ as $|x| \to \infty$. 

\begin{proposition} 
\begin{enumerate}[\rm(1)]
\item For any $\mu,\nu\in\mathcal{P}(\R)$ and any $a \in \R,b \geq0$, we have the homomorphism properties 
\begin{align}
&\mathbb{A}_{a,b}(\mu \rhd \nu) = \mathbb{A}_{a,b}(\mu) \rhd \mathbb{A}_{a,b}(\nu), \\
&\mathbb{A}_{a,b}(\mu \boxplus \nu) = \mathbb{A}_{a,b}(\mu) \boxplus \mathbb{A}_{a,b}(\nu), \\
&\mathbb{A}_{a,b}(\mu \uplus \nu) = \mathbb{A}_{a,b}(\mu) \uplus \mathbb{A}_{a,b}(\nu).  
\end{align}
\item For any $a\in\R$, $b > 0$ and any $\mu$ which is not a point measure, $\mathbb{A}_{a,b}(\mu)$ has a real analytic density on $\R$. Moreover,  if $\mu$ has a finite variance, then the density of $\mathbb{A}_{a,b}(\mu)$  $\sim \frac{b\sigma^2(\mu)}{\pi x^4}$ as $|x| \to \infty$.  
\item If $\mu$ has a finite variance, then $m_1(\mathbb{A}_{a,b}(\mu))=m_1(\mu)$ and $m_2(\mathbb{A}_{a,b}(\mu)) = m_2(\mu)$. 
\end{enumerate}
\end{proposition}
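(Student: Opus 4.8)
The plan is to prove all three parts starting from the explicit formula $F_{\mathbb{A}_{a,b}(\mu)}(z) = F_\mu(z-a+ib)+a-ib$ established in the preceding proposition. For part (1), the three homomorphism statements are immediate consequences of this formula together with the characterizations of the convolutions in terms of transforms. For the monotone case, $F_{\mu\rhd\nu} = F_\mu\circ F_\nu$, so one writes $F_{\mathbb{A}_{a,b}(\mu\rhd\nu)}(z) = F_\mu(F_\nu(z-a+ib)) + a - ib$ and checks this equals $F_{\mathbb{A}_{a,b}(\mu)}(F_{\mathbb{A}_{a,b}(\nu)}(z))$ by unfolding the right side; the shifts $+a-ib$ and $-a+ib$ telescope. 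The $\boxplus$ case follows from the fact that $F_{\mathbb{A}_{a,b}(\mu)}^{-1}(z) - a + ib = F_\mu^{-1}(z-a+ib)$ (already observed in the proof above), which converts additivity of $\phi_\mu = F_\mu^{-1} - z$ under $\boxplus$ into additivity of $\phi_{\mathbb{A}_{a,b}(\mu)}$; this is just Theorem \ref{mainthm3} specialized to $\sigma = \mathbf{c}_{a,b}$, but a direct verification is cleanest. The $\uplus$ case follows similarly from $\eta_{\mathbb{A}_{a,b}(\mu)}(z) = 1 - zF_{\mathbb{A}_{a,b}(\mu)}(1/z)$ and additivity of $\eta_\mu$ under $\uplus$, again unwinding the shift.

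For part (2), the key point is that $F_\mu$ maps $\comp^+$ into $\comp^+$ and, when $\mu$ is not a point mass, $\text{Im}\,F_\mu(z) \geq \text{Im}\,z$ with strict inequality unless $\mu = \delta_c$. Thus for $b>0$ the function $F_{\mathbb{A}_{a,b}(\mu)}(z) = F_\mu(z - a + ib) + a - ib$ extends continuously (indeed analytically) to a neighborhood of the real axis: for real $x$, the argument $x - a + ib$ lies strictly inside $\comp^+$, where $F_\mu$ is analytic, and $\text{Im}\,F_\mu(x-a+ib) \geq b$, so $\text{Im}\,F_{\mathbb{A}_{a,b}(\mu)}(x) \geq b - b = 0$; one must check it is $>0$, i.e. that $\text{Im}\,F_\mu(x-a+ib) > b$ strictly, which holds because $\mu$ is not a point mass (the Nevanlinna representation of $F_\mu$ gives $\text{Im}\,F_\mu(z) = \text{Im}\,z\,(1 + \int (1+|x|^2)/|z-x|^2\,\rho(dx))$ for the associated measure $\rho$, and $\rho \neq 0$ unless $\mu$ is a point mass). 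Then the density of $\mathbb{A}_{a,b}(\mu)$ at $x$ is $-\frac{1}{\pi}\text{Im}\,G_{\mathbb{A}_{a,b}(\mu)}(x) = \frac{1}{\pi}\,\text{Im}\,F_{\mathbb{A}_{a,b}(\mu)}(x)/|F_{\mathbb{A}_{a,b}(\mu)}(x)|^2 > 0$, which is real analytic in $x$ since $F_{\mathbb{A}_{a,b}(\mu)}$ is analytic and nonvanishing near $\R$. For the tail asymptotics, one uses the moment expansion $F_\mu(z) = z - m_1(\mu) - (m_2(\mu) - m_1(\mu)^2)z^{-1} + o(z^{-1}) = z - m_1 - \sigma^2(\mu)/z + o(1/z)$ as $|z| \to \infty$ (valid when $\mu$ has finite variance), substitute $z \mapsto x - a + ib$, and extract the imaginary part; the leading term $x$ cancels against $-z$ in $F_{\mathbb{A}_{a,b}(\mu)}$... more precisely, $F_{\mathbb{A}_{a,b}(\mu)}(x) = x - m_1(\mu) - \sigma^2(\mu)/(x-a+ib) + o(1/x)$, so $\text{Im}\,F_{\mathbb{A}_{a,b}(\mu)}(x) \sim b\sigma^2(\mu)/x^2$ and $|F_{\mathbb{A}_{a,b}(\mu)}(x)|^2 \sim x^2$, giving density $\sim b\sigma^2(\mu)/(\pi x^4)$.

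For part (3), the cleanest route is to compute moments from the asymptotic expansion of $F_{\mathbb{A}_{a,b}(\mu)}$ (equivalently $G_{\mathbb{A}_{a,b}(\mu)}$) at infinity. Since $\mu$ has finite variance, $G_\mu(z) = 1/z + m_1(\mu)/z^2 + m_2(\mu)/z^3 + o(1/z^3)$, hence $F_\mu(z) = z - m_1(\mu) - \sigma^2(\mu)/z + o(1/z)$. Then $F_{\mathbb{A}_{a,b}(\mu)}(z) = F_\mu(z - a + ib) + a - ib = (z - a + ib) - m_1(\mu) - \sigma^2(\mu)/(z-a+ib) + o(1/z) + a - ib = z - m_1(\mu) - \sigma^2(\mu)/z + o(1/z)$, where the shift $\pm(a - ib)$ cancels exactly and $-\sigma^2(\mu)/(z - a + ib) = -\sigma^2(\mu)/z + O(1/z^2)$. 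Reading off the expansion of $F_{\mathbb{A}_{a,b}(\mu)}$ in the same form $z - m_1 - \sigma^2/z + \cdots$ shows $m_1(\mathbb{A}_{a,b}(\mu)) = m_1(\mu)$ and $\sigma^2(\mathbb{A}_{a,b}(\mu)) = \sigma^2(\mu)$, whence $m_2(\mathbb{A}_{a,b}(\mu)) = \sigma^2 + m_1^2 = m_2(\mu)$. One subtlety: having a genuine asymptotic expansion of $G_{\mathbb{A}_{a,b}(\mu)}$ up to order $1/z^3$ is what licenses reading off the first two moments, and this needs a justification that finiteness of the second moment of $\mu$ indeed transfers; this can be seen either from the density tail bound in part (2) (the $1/x^4$ decay guarantees finite second moment of $\mathbb{A}_{a,b}(\mu)$) or directly from the known fact that $F_\mu$ admits such an expansion iff $\mu$ has the corresponding moments. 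The main obstacle is this last bookkeeping in part (3) — making rigorous the passage from ``$F$ has an expansion to order $1/z$'' to ``the measure has two finite moments with the stated values'' — but part (2)'s tail estimate resolves it, so I would prove (2) before (3) and invoke it.
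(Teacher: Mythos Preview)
Your proposal is correct and follows essentially the same route as the paper. Two small points of comparison are worth noting.

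For the tail asymptotic in part~(2), you substitute $z=x-a+ib$ into the expansion $F_\mu(z)=z-m_1-\sigma^2/z+o(1/z)$ and take imaginary parts. This is slightly delicate: along the horizontal line $\text{Im}\,z=b$ the imaginary part of an $o(1/z)$ term need not be $o(1/x^2)$ a priori. The paper sidesteps this by using the exact Nevanlinna representation $F_\mu(z)=z-m_1(\mu)+\int_\R\frac{\rho_\mu(du)}{u-z}$ with $\rho_\mu(\R)=\sigma^2(\mu)$, which gives $\text{Im}\,F_{\mathbb{A}_{a,b}(\mu)}(x)=\int_\R\frac{b}{(u-x+a)^2+b^2}\,\rho_\mu(du)$ exactly, and then dominated convergence yields $\sim b\sigma^2/x^2$. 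Since you already invoke Nevanlinna for the strict inequality $\text{Im}\,F_\mu>\text{Im}\,z$, this is an easy fix, not a gap.

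For part~(3), your plan (read off moments from the expansion of $F_{\mathbb{A}_{a,b}(\mu)}$, and use the $x^{-4}$ tail from part~(2) to justify that the moments exist) is valid. The paper takes a slightly more direct route: it uses Maassen's characterizations $m_1(\mu)=-\lim_{y\to\infty}(F_\mu(iy)-iy)$ and $\sigma^2(\mu)=\lim_{y\to\infty}y\,|F_\mu(iy)-iy+m_1(\mu)|$, checks these limits for $F_{\mathbb{A}_{a,b}(\mu)}(iy)=F_\mu(iy-a+ib)+a-ib$, and reads off $m_1$ and $\sigma^2$ without needing part~(2). Both approaches work; the paper's avoids the circular-looking appeal to~(2).
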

\begin{proof}
(1)\,\,\, The second equality was proved in Theorem \ref{mainthm3}. The others can be shown from the formula (\ref{eq0121}). 

(2)\,\,\, If $\mu$ is not a point measure, $\text{Im}(F_{\mu}(z)) > \text{Im}(z)$ for $z \in \comp^+$. Using the Stieltjes inversion formula, we have 
\[
\mathbb{A}_{a,b}(\mu)(dx) = \frac{\text{Im}(F_\mu(x-a+ib)) -b}{\pi|F_\mu(x-a+ib)+a -ib|^2}dx.   
\]
The density function is real analytic and strictly positive on $\R$.  
The reciprocal Cauchy transform $F_\mu$ has the Nevanlinna representation as $F_{\mu}(z) = z - m_1(\mu) + \int_{\R}\frac{\rho_\mu(du)}{u-z}$ for a finite non-negative measure $\rho_\mu$ which satisfies $\rho_\mu(\R) = \sigma^2(\mu)$~\cite{M92}. So $F_\mu(x - a +ib) +a - ib  \sim x$ as $|x| \to \infty$ and 
\[
\text{Im}\!\left(F_\mu(x - a +ib)  +a - ib\right) = \int_{\R}\frac{b}{(u-x+a)^2+b^2}\rho_\mu(du) \sim \frac{b\sigma^2}{x^2}. 
\]
Therefore, the density of $\mathbb{A}_{a,b}(\mu)$ behaves as $\sim \frac{b\sigma^2}{\pi x^4}$ as $|x|\to \infty$. 

(3)\,\,\, The first moment $m_1(\mu)$ is characterized by $F_\mu(iy)-iy +m_1(\mu)=o(1)$ ($y\to\infty$), and so $m_1(\mathbb{A}_{a,b}(\mu))=m_1(\mu)$. 
The variance $\sigma^2(\mu)=\rho_\mu(\R)$ is equal to $\lim_{y \to \infty}|F_\mu(iy)-iy +m_1(\mu)|y$~\cite{M92}. Applying this formula to $\mathbb{A}_{a,b}(\mu)$, we have $\sigma^2(\mu) = \sigma^2(\mathbb{A}_{a,b}(\mu))$ and hence $m_2(\mathbb{A}_{a,b}(\mu)) = m_2(\mu)$. 
\end{proof}

We prove that $\mathbb{A}_{a,b}(\mu)$ is freely infinitely divisible for large $b$ if $\mu$ has finite variance. We introduce a subclass of freely infinitely divisible distributions characterized by \emph{univalent inverse} Cauchy transforms. 
\begin{definition}
A probability measure $\mu$ on $\R$ is said to be in class $\iu$ if $F_\mu^{-1}$, originally defined in some $\Gamma_{\lambda,M}$, extends to a univalent function in $\comp^+$. 
\end{definition}
The class $\iu$ is a subset of all freely infinitely divisible distributions as proved in \cite{AHa}.

\begin{theorem}\label{thm101}
Let $\mu$ be a probability measure with finite variance $\sigma^2$. Then $\mathbb{A}_{a,b}(\mu) \in \iu$ for $b \geq 2\sigma$. 
\end{theorem}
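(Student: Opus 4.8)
The plan is to show directly that $F_{\mathbb{A}_{a,b}(\mu)}^{-1}$ extends to a univalent map on $\comp^+$. By the formula $(\ref{eq0121})$, $F_{\mathbb{A}_{a,b}(\mu)}(z) = F_\mu(z-a+ib)+a-ib$, so its inverse on a truncated cone is $F_{\mathbb{A}_{a,b}(\mu)}^{-1}(w) = F_\mu^{-1}(w-a+ib)+a-ib$. Since translation by the constant $-a+ib$ is an automorphism of $\comp$ mapping $\comp^+$ into itself (as $b\geq 0$), it suffices to treat the case $a=0$ and show that $F_\mu^{-1}(\,\cdot\, + ib)$, i.e. the function $w\mapsto F_\mu^{-1}(w+ib)$, extends univalently to all of $\comp^+$. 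Equivalently, writing $\phi_\mu = F_\mu^{-1}(z)-z$ for the Voiculescu transform, one wants $z\mapsto z + \phi_\mu(z+ib) + ib$ (or after absorbing constants, $z\mapsto z+\phi_\mu(z+ib)$) to be univalent on $\comp^+$; and membership in $\iu$ is exactly this univalence statement for $F_{\mathbb{A}_{a,b}(\mu)}^{-1}$ itself.

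The key analytic input is the Nevanlinna-type representation of the Voiculescu transform for a measure of finite variance: $\phi_\mu(z) = m_1(\mu) + \int_\R \frac{1+uz}{z-u}\,\tau_\mu(du)$ is not quite right for $\phi_\mu$ directly, so instead I would work from $F_\mu(z) = z - m_1(\mu) + \int_\R \frac{\rho_\mu(du)}{u-z}$ with $\rho_\mu(\R)=\sigma^2$, already quoted in the excerpt from \cite{M92}. The first step is to establish that $F_\mu$ is univalent on the half-plane $\{\operatorname{Im} z > \sigma\}$, or on a suitable translate of it, with image containing $\comp^+ + ib$ for $b\geq 2\sigma$; a clean way is the estimate $|F_\mu'(z) - 1| = \left|\int_\R \frac{\rho_\mu(du)}{(u-z)^2}\right| \leq \frac{\sigma^2}{(\operatorname{Im} z)^2} < 1$ whenever $\operatorname{Im} z > \sigma$, which by a standard convexity/Noshiro–Warschawski argument forces $F_\mu$ to be univalent on the half-plane $\operatorname{Im} z > \sigma$. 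The second step is to control the image: for $\operatorname{Im} z > \sigma$ one gets $\operatorname{Im} F_\mu(z) \geq \operatorname{Im} z - \sigma^2/\operatorname{Im} z$, so the image of $\{\operatorname{Im} z > 2\sigma\}$ contains $\{\operatorname{Im} w > 3\sigma/2\}$ (or some explicit half-plane), which contains $\comp^+ + ib$ once $b\geq 2\sigma$; hence $F_\mu^{-1}$ is defined and univalent on $\comp^+ + ib$, and composing with the translation gives the univalent extension of $F_{\mathbb{A}_{a,b}(\mu)}^{-1}$ to $\comp^+$. Finally invoke the result from \cite{AHa} that $\iu$ consists of freely infinitely divisible laws to conclude.

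I expect the main obstacle to be getting the constant sharp enough to land at exactly $b\geq 2\sigma$ rather than some larger multiple of $\sigma$: the naive bound $\operatorname{Im} z > \sigma$ for univalence of $F_\mu$ combined with the image estimate must be balanced so that the half-plane $\operatorname{Im} w > b$ lies inside $F_\mu(\{\operatorname{Im} z > \sigma\})$ precisely when $b\geq 2\sigma$. This is where one optimizes: on $\{\operatorname{Im} z = \sigma + t\}$ one has $\operatorname{Im} F_\mu(z) \geq (\sigma+t) - \sigma^2/(\sigma+t) = \frac{(\sigma+t)^2-\sigma^2}{\sigma+t} = \frac{t^2+2\sigma t}{\sigma + t}$, and one checks this lower bound, as a function that grows from $0$ at $t=0$, reaches value $2\sigma$ at some $t_0$; one needs the image of the whole region $\{\operatorname{Im} z>\sigma\}$ to cover $\{\operatorname{Im} w \geq 2\sigma\}$, which is a connectedness-plus-boundary-behavior argument rather than a pointwise one. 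A secondary point requiring care is that $F_\mu^{-1}$ must be shown to be the genuine (univalent) extension of the transform originally defined only on a cone $\Gamma_{\lambda,M}$ — this follows from uniqueness of analytic continuation since both agree on $\Gamma_{\lambda,M}$, so it is routine but should be stated.
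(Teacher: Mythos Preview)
Your approach is the same as the paper's: the paper simply invokes Maassen's Lemma~2.4 (\cite{M92}), which states that for $\mu$ with variance $\sigma^2$ the map $F_\mu$ is univalent on $\comp_\sigma:=\{\operatorname{Im} z>\sigma\}$ with $F_\mu(\comp_\sigma)\supset\comp_{2\sigma}$, and then translates by $a-ib$ via (\ref{eq0121}) to get $F_{\mathbb{A}_{a,b}(\mu)}(\comp_{-b+\sigma})\supset\comp_{2\sigma-b}\supset\comp^+$ for $b\ge 2\sigma$. You are essentially reproving Maassen's lemma inline; your univalence step via $|F_\mu'(z)-1|\le\sigma^2/(\operatorname{Im} z)^2<1$ on $\comp_\sigma$ and Noshiro--Warschawski is exactly right.

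The image-containment step in your outline, however, has the inequality pointing the wrong way. A \emph{lower} bound on $\operatorname{Im} F_\mu(z)$ tells you where the image lies, not what it covers. What you actually need is the \emph{upper} bound
\[
\operatorname{Im} F_\mu(z)=\operatorname{Im} z+\int_\R\frac{\operatorname{Im} z}{|u-z|^2}\,\rho_\mu(du)\le\operatorname{Im} z+\frac{\sigma^2}{\operatorname{Im} z},
\]
which on (limits to) the boundary line $\operatorname{Im} z=\sigma$ gives $\operatorname{Im} F_\mu(z)\le 2\sigma$. Combined with univalence on $\comp_\sigma$ and $F_\mu(z)-z\to -m_1(\mu)$ at infinity, an argument-principle (or open-mapping plus boundary-behaviour) argument then yields $F_\mu(\comp_\sigma)\supset\comp_{2\sigma}$, giving precisely the sharp constant $b\ge 2\sigma$. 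Your instinct that this is a ``connectedness-plus-boundary-behaviour'' argument is correct; once you flip the estimate, the optimization you were worried about disappears.
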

\begin{remark} Assume that $\mu$ is not $\boxplus$-infinitely divisible. 
If we use the free divisibility indicator $\phi(\mu)$, the assumption $b \geq 2\sigma$ can be weakened to $b \geq 2\sigma\sqrt{1-\phi(\mu)}$.  
\end{remark}
\begin{proof}
Let $\comp_s:=\{z \in \comp: \text{Im}(z) > s\}$. Maassen proved that $F_\mu$ is univalent in $\comp_{\sigma}$ and $F_\mu(\comp_\sigma) \supset \comp_{2\sigma}$ (\cite{M92}, Lemma 2.4). Therefore, $F_{\mathbb{A}_{a,b}(\mu)}(\comp_{-b + \sigma}) = F_\mu(\comp_{\sigma}) +a - ib \supset \comp_{2\sigma -b} \supset \comp^+$, so that $F_{\mathbb{A}_{a,b}(\mu)}^{-1}$ can be defined in $\comp^+$ as a univalent function. 
\end{proof}

Starting from atomic measures, we obtain many freely infinitely divisible measures whose densities are rational functions. 
\begin{example}\label{exa1}

Let $b > 0, n > 1, \lambda_j > 0, \sum_{j=1}^n \lambda_j =1$ and $a_j \in \R$. The measure 
$\mathbb{A}_{a,b}(\sum_{j=1}^n \lambda_j \delta_{a_j})$ has a rational function density. If $b >0$ is large enough, it is freely infinitely divisible. 

Let $\mu_p:=\frac{p}{2}(\delta_{-1} + \delta_{1}) + (1-p)\delta_0$, $0 \leq p \leq 1$. Then $G_{\mu_p}(z) = \frac{z^2 -1 +p}{z(z^2-1)}$, so that $G_{\mathbb{A}_{0,b}(\mu_p)}(z) = \frac{z^2 -b^2-1+p+2ibz}{z^3 - (1+b^2)z  +2ib(z^2-p/2)}$. 
The measure $\mathbb{A}_{0,b}(\mu_p)$ is given by 
\[
\mathbb{A}_{0,b}(\mu_p)(dx) = \frac{bp(x^2+b^2+1-p)}{\pi[x^6 + 2(b^2-1)x^4+(b^4+2b^2(1-2p) +1)x^2 +p^2b^2]}dx. 
\]
From Theorem \ref{thm101}, $\mathbb{A}_{0,b}(\mu_p) \in \iu$ for $b \geq 2\sqrt{p}$. 

In the particular case $p=1$ and $b=2$, $\mathbb{A}_{0,2}(\mu_1)$ is a scaled $t$-distribution with 3 degrees of freedom: 
\[
\mathbb{A}_{0,2}(\mu_1)(dx) = \frac{2}{\pi(1+x^2)^2}dx, ~~x \in \R.   
\]
We can see $\mathbb{A}_{0,b}(\mu_1)$ is not freely infinitely divisible for $b < 2$ as follows. The Voiculescu transform of $\mu_1$ is $\phi_{\mu_1}(z) = \frac{-z + \sqrt{z^2 + 4}}{2}$. This function cannot be analytic in $\comp_{b}$. From \cite[Theorem 5.10]{BV93}, $\mathbb{A}_{0,b}(\mu_1)$ is not freely infinitely divisible.  Note that the Student distribution is freely infinitely divisible for more parameters \cite{Ha}. 
\end{example}

\begin{remark}[Multiplicative analogue on $\tor$]
For $a \geq 0$ and $b \in \R$, let $\hat{\mathbf{c}}=\hat{\mathbf{c}}_{a,b}$ be a probability measure on $\tor$ defined by
\[
\hat{\mathbf{c}} (d\theta) = \frac{1}{2\pi}\frac{1 - e^{-2a}}{1 + e^{-2a}-2e^{-a}\cos(\theta - b)}d\theta, ~~0 \leq \theta < 2\pi.
\]
This is an analogue of the Cauchy distribution on $\R$ since both are the Poisson kernels. Let $c:=e^{-a+ib}$, then $\eta_{\hat{\mathbf{c}}}(z) = cz$ and
\[
\Sigma_{\hat{\mathbf{c}}}(z) = k_{\hat{\mathbf{c}}}(z) = c^{-1}= e^{-ib}\exp\left(a\int_{\tor}\frac{1+\zeta z}{1-\zeta z}\omega(d\zeta)\right),
\]
where $\omega$ is the normalized Haar measure. The map $\mathbb{B}_{\hat{\mathbf{c}}}$ is denoted by $\mathbb{B}_{a,b}.$ It is not hard to see that $\mathbb{B}_{a,b}(\mu)$ is characterized by 
\begin{equation}\label{eq0122}
\eta_{\mathbb{B}_{a,b}(\mu)}(z) = c^{-1}\eta_\mu(cz). 
\end{equation}
which is the analog of  (\ref{eq0121}) for the multiplicative case.  Moreover, for any pair of measures $\mu,\nu\in\mathcal{P}(\tor)$ and any $a \in \R,b \geq0$, we have the homomorphism properties 
\begin{align}
&\mathbb{B}_{a,b}(\mu \calt\nu) = \mathbb{B}_{a,b}(\mu)\calt\mathbb{B}_{a,b}(\nu), \\
&\mathbb{B}_{a,b}(\mu \boxtimes \nu) = \mathbb{B}_{a,b}(\mu) \boxtimes(\mathbb{B}_{a,b}(\nu), \\
&\mathbb{B}_{a,b}(\mu \utimes \nu) = \mathbb{B}_{a,b}(\mu) \utimes \mathbb{B}_{a,b}(\nu), 
\end{align}
where $\utimes$ is the multiplicative Boolean convolution (see \cite{F09a,F09b}). 
\end{remark}

\end{document}